\theoremstyle{thmrm}
\theoremstyle{plain}
\newtheorem{thm}{Theorem}[section]
\newtheorem{lemma}[thm]{Lemma}
\newtheorem{prop}[thm]{Proposition}
\newtheorem{cor}[thm]{Corollary}
\newtheorem*{thma}{Theorem A}
\newtheorem*{thmb}{Theorem B}
\theoremstyle{definition}
\newtheorem{remark}[equation]{Remark}
\newcommand{\dlabel}[1]{\ifmmode \text{\ttfamily \upshape [#1] } \else
{\ttfamily \upshape [#1] }\fi \label{#1}}
\newcommand{\B}{\operatorname{B} }
\newcommand{\C}{\operatorname{C} }
\newcommand{\Ho}{\operatorname{H} }
\newcommand{\RH}{\operatorname{RH} }
\newcommand{\Z}{\operatorname{Z} }
\newcommand{\im}{\operatorname{Im} }
\newcommand{\Id}{\operatorname{Id}}
\newcommand{\Aut}{\operatorname{Aut} }
\newcommand{\Autb}{\operatorname{Autb} }
\newcommand{\res}{\operatorname{res} }
\newcommand{\Ext}{\operatorname{Ext} }
\newcommand{\STExt}{\operatorname{STExt} }
\newcommand{\CExt}{\operatorname{CExt} }
\newcommand{\Fun}{\operatorname{Fun} }
\newcommand{\Ker}{\operatorname{Ker} }
\newcommand{\IM}{\operatorname{Im} }
\title{Cohomology, Extensions and Automorphisms of Skew Braces}
\author{Nishant}
\address{School of Mathematics, Harish-Chandra Research Institute, HBNI,
Chhatnag Road, Jhunsi, Allahabad - 211 019, INDIA}
\email{nishant@hri.res.in}
\author{Manoj K.~Yadav}
\address{School of Mathematics, Harish-Chandra Research Institute, HBNI, 
Chhatnag Road, Jhunsi, Allahabad - 211 019, INDIA}
\email{myadav@hri.res.in}
\subjclass[2010]{20E22, 20J05, 20J06,16T25}
\keywords{left skew brace, cohomology, extension, automorphism}
\begin{document}

\maketitle

\begin{abstract}
The second cohomology group of a left skew brace with coefficients in a trivial left brace with non-trivial actions is defined,  its connection with extensions of  a left skew brace by  a trivial braces is established and a Wells' like exact sequence relating the second cohomology group with inducible automorphisms of an extension of left skew  braces is constructed. 

\end{abstract}

\section{Introduction}

Classification of set-theoretic solutions of the Yang-Baxter equation, proposed by Drinfeld \cite{D92}, is a wide open problem.
In 2007, Rump \cite{WR07}  introduced the notion of braces in connection with  involutive non-degenerate  set-theoretic solutions of the Yang-Baxter equation. This concept was generalised by Guarnieri and Vendramin \cite{GV17} to the concept of skew braces to take into account  non-involutive non-degenerate solutions too.  An algebraic structure $(E, + , \circ)$ is said to be a left skew brace  if $(E, +)$  and $(E, \circ)$ are groups and, for all $a, b, c \in E$, the following compatibility condition holds:
\begin{equation*}\label{bcomp}
a \circ (b + c ) = (a \circ b) - a + (a \circ c).
\end{equation*}
A right skew brace can be defined analogously. In the present article, we shall consider only left skew  braces. A left skew brace is called a left brace if $(E, +)$ is an abelian group. The present definition of a left brace, which is  slightly different but equivalent to the one given by Rump,  is due to  Cedo, Jespers and Onkinski \cite{CJO}.

As it is well known now (see  \cite{CJO}), a left brace gives rise to a non-degenerate involutive set-theoretic solution of the Yang-Baxter equation (and vice-versa). There is also a bijection between left skew braces and non-degenerate  set-theoretic solutions of the Yang-Baxter equation (see \cite{GV17}). Not only this, braces have deep connections with many other algebraic structures; linear cycle sets, radical rings, Hopf-Galois extensions, bijective $1$-cocycles, to name some. So each new construction of a left (skew) brace contributes to the solutions of the Yang-Baxter equation and enriches the class of other related structures. Constructing new algebraic structures from the existing ones  is always an important and interesting problem. So is the case for  skew braces. We mention some such constructions. Notion of semidirect product of braces was introduced in \cite{WR08}, which was generalised to asymmetric product of braces  in \cite{CCS}. Asymmetric product of braces was further studied in \cite{BCJO19}, where the authors also studied the wreath product of braces. Matched product of braces was investigated in \cite{DB18}. Iterated matched product of braces was taken up in \cite{BCJO18}. Matched product of skew braces  was studied in \cite{CCS1, CCS2}. 

Our interest in this note is the cohomology and extension theory of skew braces. Extension theory on the one hand provides new constructions of the skew braces and on the other hand is expected to play a fundamental role in the classification of braces and skew braces. Vendramin \cite{V16} studied dynamical extensions of finite cycle sets and its relationship with dynamical co-cycles. The topic was further studied by Castelli, Catino, Miccoli and Pinto \cite{CCMP} for quasi-linear cycle sets. Dynamical extensions give rise to non-degenerate involutive multipermutation  solutions of the Yang-Baxter equation. Extensions of bijective $1$-cocycles were investigated by Ben David and Ginosar \cite{DG16}. Homology and cohomology theories for solution sets of the Yang-Baxter equations were developed  by Carter, Elhambadi and Saito \cite{CES}. Different homology theories for various structures related to solutions of   the Yang-Baxter equations were extensively investigated by Lebel and Vendramin \cite{LV17}. They also defined second cohomology group for a cycle set $X$ with coefficients in an abelian group $A$ and established  a bijective correspondence between the second cohomology group and equivalence classes of extensions of $X$ by $A$.  

The present article is based on several further advances in this direction: (a) A general homology and cohomology theory  was developed for linear cycle sets and left braces by Lebed and Vendramin \cite{LV16} with trivial actions, where the authors extensively explored close connections between the second cohomology of linear cycle sets and  extension theory.  (J.A. Guccione and J.J. Guccione \cite{GG21} further investigated the ideas of \cite{LV16} and used `perturbation lemma' for computing first and second cohomology groups and extensions of  cyclic linear cycles set acting trivially on an abelian group).    (b)  Bachiller \cite{DB18} considered extensions of left braces with non-trivial actions and characterised equivalent extension in the form of certain pairs of maps and relations among them. It turns out that these pairs of maps generalise the 2-cocycles from (a).

We develop extension theory for skew braces, define the second cohomology group of a skew brace acting on a trivial brace with non-trivial actions and establish connections between these.  This generalises results from (a) and (b). We then present an exact sequence connecting automorphism groups of  skew braces with second cohomology group.
Let $H:= (H, +, \circ)$ be a left skew brace and $I := (I, +)$ an abelian group viewed as a trivial left brace. Suppose that  $(H, \circ)$ acts on $(I, +)$ from left by an action $\nu$ and from right by  actons $\mu$ and  $\sigma$. The images of  $h \in H$ in $\Aut(I, +)$, the automorphism group of the group $(I, +)$, under $\nu$, $\mu$ and $\sigma$ are denoted by $\nu_h$, $\mu_h$ and $\sigma_h$ respectively. Of particular interest are the triplets of actions $(\nu, \mu, \sigma)$ which satisfy
\begin{eqnarray*}
\mu_{-h_{1} + (h_1 \circ h_3)}(y) &= & - \;\nu_{h_1 \circ (h_2 + h_3)}(\sigma_{h_2 + h_3}(\nu^{-1}_{h_1}(y)))+ \mu_{-h_1 + (h_1\circ h_3)}(\nu_{h_1 \circ h_2 }(\sigma_{h_2}(\nu^{-1}_{h_1}(y))))\\
& & + \; \nu_{h_1 \circ h_3}(\sigma_{h_3}(\nu^{-1}_{h_1}(y)))\\
\mu_{-h_1 + (h_1 \circ h_3)}(\nu_{h_1}(y))  & =& \nu_{h_1}(\mu_{h_3}(y)),
\end{eqnarray*}
for all $h_1, h_2, h_3 \in H$, $y \in I$. Such a triplet will be called good triplet of actions. For a given good triplet of actions, we define $\Ho_N^2(H, I)$, the second cohomology group of $H$ with coefficients in $I$ (see Section 3). Let $\Ext(H, I)$ denote the set of equivalence classes of extensions of $H$ by $I$ (see Section 2 for the definition). It turns out that 
$$\Ext(H, I) = \bigsqcup_{(\nu, \mu, \sigma)} \Ext_{(\nu, \mu, \sigma)}(H, I),$$
where the triplets $(\nu, \mu, \sigma)$ run over all good triplets of actions of $H$ on $I$ and $\Ext_{(\nu, \mu, \sigma)}(H, I)$ is the set of all equivalence classes of extensions of $H$ by $I$ whose corresponding good triplet of actions is $(\nu, \mu, \sigma)$ (see Corollary \ref{cor-sec3}). We can now state our first result
\begin{thma}\label{thma}
Let $H$ be a left skew brace, which acts on a trivial brace $I$ by a good triplet of actions $(\nu, \mu, \sigma)$. Then there exists a bijection between $\Ho_N^2(H, I)$ and $\Ext_{(\nu, \mu, \sigma)}(H, I)$.
\end{thma}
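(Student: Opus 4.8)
The argument follows the standard template by which a second cohomology group classifies extensions, carried out here simultaneously for the additive and the circle operation of a brace. The plan is to produce explicit maps $\Phi\colon \Ext_{\nu,\sigma}(H,I)\to \Ho_N^2(H,I)$ and $\Psi\colon \Ho_N^2(H,I)\to \Ext_{\nu,\sigma}(H,I)$ and to check that they are mutually inverse.

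To define $\Phi$, start with an extension $0\to I\to E\to H\to 0$ whose associated good pair of actions is $(\nu,\sigma)$, and fix a normalized set-theoretic section $s\colon H\to E$ with $s(0)=0$. Measuring the failure of $s$ to respect each operation produces two functions $H\times H\to I$: an additive one, $f(h_1,h_2):=s(h_1)+s(h_2)-s(h_1+h_2)$, and a multiplicative one, $g(h_1,h_2):=\bigl(s(h_1)\circ s(h_2)\bigr)\circ s(h_1\circ h_2)^{-1}$, both read inside the ideal $I$. Associativity and commutativity of $(E,+)$ give the usual symmetric $2$-cocycle identity for $f$; associativity of $(E,\circ)$ gives the $2$-cocycle identity for $g$ relative to the $\nu$-action; and expanding the brace compatibility law on $s(h_1)\circ(s(h_2)+s(h_3))+s(h_1)$ yields the mixed relation tying $f$ to $g$. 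Together these say precisely that $(f,g)$ is a $2$-cocycle in the sense of Section 3. Replacing $s$ by another normalized section $s'$, say $s'(h)=s(h)+t(h)$ with $t\colon H\to I$, changes $(f,g)$ by the coboundary of $t$, and equivalent extensions visibly give the same class; so $\Phi$ is well defined.

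To define $\Psi$, given a $2$-cocycle $(f,g)$ set $E:=I\times H$ with
\[
(y_1,h_1)+(y_2,h_2):=\bigl(y_1+y_2+f(h_1,h_2),\, h_1+h_2\bigr),
\]
\[
(y_1,h_1)\circ(y_2,h_2):=\bigl(y_1+\nu_{h_1}(y_2)+g(h_1,h_2),\, h_1\circ h_2\bigr).
\]
The cocycle identity for $f$ makes $(E,+)$ an abelian group and that for $g$ makes $(E,\circ)$ a group; the projection $E\to H$ is then a surjective brace morphism with kernel $\{(y,0):y\in I\}\cong I$, provided the brace compatibility law holds in $E$, and verifying this is exactly where the mixed $f$--$g$ relation and, crucially, the good-pair-of-actions identity for $(\nu,\sigma)$ are consumed; one also reads off that the actions induced by this extension are $\nu$ and $\sigma$. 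Cohomologous cocycles, differing by the coboundary of some $t\colon H\to I$, yield equivalent extensions via $(y,h)\mapsto(y+t(h),h)$, so $\Psi$ is well defined.

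Finally one checks $\Psi\Phi=\mathrm{id}$ and $\Phi\Psi=\mathrm{id}$: for the model extension attached to $(f,g)$ the obvious section $h\mapsto(0,h)$ recovers $(f,g)$, while for an arbitrary extension the section $s$ furnishes an isomorphism onto its model which is an equivalence of extensions. The main obstacle is bookkeeping rather than conceptual: one must fix conventions for the placement of the $\nu$- and $\sigma$-twists and of $f,g$ and keep them consistent across all the verifications, and --- the delicate point --- confirm that the brace compatibility law in $E$ unravels into \emph{exactly} the mixed $2$-cocycle condition of Section 3 together with the good-pair hypothesis, with nothing left over, and conversely that this hypothesis is precisely what makes the construction close up. Combined with the decomposition $\Ext(H,I)=\bigsqcup_{(\nu,\sigma)}\Ext_{\nu,\sigma}(H,I)$ of Corollary \ref{cor-sec3}, this yields the stated bijection.
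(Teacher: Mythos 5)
Your overall strategy is the same as the paper's: extract a pair of cocycles from a set-theoretic section, build a model extension from a cocycle, and check that the two constructions are mutually inverse up to cohomology and equivalence. However, the explicit formula you give for the circle operation on the model, $(y_1,h_1)\circ(y_2,h_2)=\bigl(y_1+\nu_{h_1}(y_2)+g(h_1,h_2),\,h_1\circ h_2\bigr)$, is the ordinary group-extension multiplication for $(H,\circ)$ acting on $I$ by $\nu$, and it omits the right action $\sigma$ entirely. This is not a bookkeeping issue. In such a model the conjugation action of the section on $I$ is forced: with $s(h)=(0,h)$ one computes $s(h)^{-1}\circ(y,0)\circ s(h)=(\nu_h^{-1}(y),0)$, so the induced right action of your $\Psi(f,g)$ is $\nu^{-1}$, not $\sigma$. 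Hence $\Psi$ does not land in $\Ext_{\nu,\sigma}(H,I)$ except in the degenerate case $\sigma=\nu^{-1}$, and for a general good pair the brace compatibility between your $+$ and your $\circ$ fails --- the compatibility law is precisely where $\sigma$ and the good-pair identity must enter, and in your formulas they never do. The correct multiplication (the paper's Theorem 3.4) carries the twist $\nu_{h_1\circ h_2}\bigl(\sigma_{h_2}(\nu_{h_1}^{-1}(y_1))\bigr)$ on the $I$-coordinate; it arises from rewriting $s(h_1)+y_1$ as $s(h_1)\circ\nu_{h_1}^{-1}(y_1)$ and then pushing $\nu_{h_1}^{-1}(y_1)$ past $s(h_2)$ by conjugation, which is exactly where $\sigma$ appears. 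The same omission infects your cocycle identities: the multiplicative $2$-cocycle condition must end with the term $\nu_{h_1\circ h_2\circ h_3}\bigl(\sigma_{h_3}(\nu_{h_1\circ h_2}^{-1}(f(h_1,h_2)))\bigr)$ rather than with plain $f(h_1,h_2)$, and your version again collapses to the special case $\sigma=\nu^{-1}$.

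Once the multiplication and the corresponding cocycle condition are corrected, the remainder of your outline --- independence of the class from the choice of section, cohomologous cocycles giving equivalent extensions via $(y,h)\mapsto(y+t(h),h)$, and the two round-trip checks using the section $h\mapsto(0,h)$ --- is exactly the argument of the paper.
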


Let  $\Z^1_N(H,I)$ denote the group of all derivations from a left skew brace $H$ to a trivial brace $I$ as defined in Section 3. For an extension
$$\mathcal{E}: 0 \rightarrow I \rightarrow E \overset{\pi}\rightarrow H$$
 of $H$ by $I$, where $I$ is viewed as an ideal of $E$, we denote  by $\Autb_I(E)$ the set of all brace automorphisms of $E$ which normalize $I$. It follows that $\Autb_I(E)$ is a subgroup of $\Autb(E)$, the group of all brace automorphisms of $E$. For the extension $\mathcal{E}$, as explained in Section 3, one can associate a unique good triplet of actions $(\nu, \mu, \sigma)$. Let 
 $$\C_{(\nu, \mu, \sigma)} := \{ (\phi, \theta) \in \Autb(H) \times \Autb(I) \mid \nu_h=\theta^{-1}\nu_{\phi(h)}\theta, \mu_h = \theta^{-1}\mu_{\phi(h)}\theta \mbox{ and } \sigma_h = \theta^{-1}\sigma_{\phi(h)}\theta \}.$$
Our next result is an analog for skew braces of an exact sequence, called the fundamental exact sequence of Wells \cite{W71}, relating derivations, automorphisms and cohomology group of groups. For unexplained symbols in the following result see Section 5.

\begin{thmb}\label{thmb}
Let $\mathcal{E}: 0 \rightarrow I \rightarrow E \overset{\pi}\rightarrow H$ be a extension of a left skew brace  $H$ by a trivial left brace $I$ such that $[\mathcal{E}] \in \Ext_{(\nu, \mu, \sigma)}(H,I)$. Then we have the following exact sequence of groups 
$$0 \rightarrow \Z^1_N(H,I) \rightarrow \Autb_I(E) \stackrel{\rho(\mathcal{E})}{\longrightarrow} \C_{(\nu, \mu, \sigma)} \stackrel{\omega(\mathcal{E})}{\longrightarrow} \Ho^2_N(H,I),$$
where $\omega(\mathcal{E})$ is, in general,  only a derivation.
\end{thmb}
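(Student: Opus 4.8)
The plan is to mimic the classical construction of the Wells sequence for group extensions, transporting every step to the brace setting using the cocycle description of extensions from Theorem A. Fix a set-theoretic section $s\colon H\to E$ of $\pi$ with $s(0)=0$, so that every element of $E$ is written uniquely as $y+s(h)$ with $y\in I$, $h\in H$, and the brace operations on $E$ are encoded by the good pair of actions $(\nu,\sigma)$ together with two cocycles (the additive one and the $\circ$-one) representing the class $[\mathcal E]\in\Ho^2_N(H,I)$. First I would define the map $\rho(\mathcal E)\colon\Autb_I(E)\to\C_{\nu,\sigma}$: given $\Phi\in\Autb_I(E)$, since $\Phi$ normalizes $I$ it induces $\theta:=\Phi|_I\in\Autb(I)$ and, passing to the quotient $E/I\cong H$, an automorphism $\phi\in\Autb(H)$. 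The compatibility $\Phi\circ(y+s(h)) = \theta(y)+s(\phi(h))+(\text{correction in }I)$ forces exactly the two conjugation relations $\nu_h=\theta^{-1}\nu_{\phi(h)}\theta$ and $\sigma_h=\theta^{-1}\sigma_{\phi(h)}\theta$, so $(\phi,\theta)\in\C_{\nu,\sigma}$; checking that $\rho$ is a group homomorphism is a routine computation.

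Next I would identify the kernel of $\rho(\mathcal E)$. An automorphism $\Phi\in\Autb_I(E)$ lies in $\ker\rho(\mathcal E)$ iff it is the identity on $I$ and induces the identity on $H$; such a $\Phi$ must have the form $\Phi(y+s(h)) = y + d(h) + s(h)$ for a unique function $d\colon H\to I$, and the requirement that $\Phi$ respect both $+$ and $\circ$ translates precisely into the condition that $d\in\Z^1_N(H,I)$ (this is where the definition of derivation from Section 3 is matched against the two operations, using the good-pair identity). One checks that this correspondence $\Phi\leftrightarrow d$ is an isomorphism of groups $\ker\rho(\mathcal E)\cong\Z^1_N(H,I)$, which gives exactness at $\Autb_I(E)$ and the injection $\Z^1_N(H,I)\hookrightarrow\Autb_I(E)$.

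Then comes the construction of the obstruction map $\omega(\mathcal E)\colon\C_{\nu,\sigma}\to\Ho^2_N(H,I)$. Given $(\phi,\theta)\in\C_{\nu,\sigma}$, pull back the extension $\mathcal E$ along $\phi$ and push it out along $\theta$; both operations preserve the good pair of actions (this is exactly why the conjugation relations were built into $\C_{\nu,\sigma}$), so $(\phi,\theta)\cdot\mathcal E$ again lies in $\Ext_{\nu,\sigma}(H,I)$, and via the bijection of Theorem A it corresponds to a class in $\Ho^2_N(H,I)$. Define $\omega(\mathcal E)(\phi,\theta):=[(\phi,\theta)\cdot\mathcal E]-[\mathcal E]$. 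Concretely, writing the cocycle pair for $\mathcal E$ as $(f,g)$, the class $(\phi,\theta)\cdot\mathcal E$ is represented by $(\theta\circ f\circ(\phi\times\phi),\theta\circ g\circ(\phi\times\phi))$, and $\omega(\mathcal E)(\phi,\theta)$ is the cohomology class of the difference. Exactness at $\C_{\nu,\sigma}$ then says: $(\phi,\theta)$ is realized by some $\Phi\in\Autb_I(E)$ — i.e.\ lies in the image of $\rho(\mathcal E)$ — if and only if $(\phi,\theta)\cdot\mathcal E$ is equivalent to $\mathcal E$, i.e.\ $\omega(\mathcal E)(\phi,\theta)=0$; unpacking the equivalence of extensions into the existence of the required $\Phi$ is a direct bookkeeping argument with the section $s$. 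Finally, I would verify that $\omega(\mathcal E)$ is a derivation: for $(\phi_1,\theta_1),(\phi_2,\theta_2)\in\C_{\nu,\sigma}$ the composite acts on cocycles by successive substitution, and expanding $\omega(\mathcal E)\big((\phi_1,\theta_1)(\phi_2,\theta_2)\big)$ produces $\omega(\mathcal E)(\phi_1,\theta_1)$ plus the $(\phi_1,\theta_1)$-twist of $\omega(\mathcal E)(\phi_2,\theta_2)$, which is the cocycle identity for a crossed homomorphism into $\Ho^2_N(H,I)$ with $\C_{\nu,\sigma}$ acting through the $(\phi,\theta)$-substitution on cohomology classes.

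The main obstacle I anticipate is not the diagram-chasing but keeping the two operations $+$ and $\circ$ simultaneously under control: every map ($\Phi$, $d$, the cocycles) must be compatible with both structures, and the good-pair condition is precisely the glue that makes the additive and multiplicative cocycle data consistent. In particular, checking that the $d$ arising in $\ker\rho(\mathcal E)$ is genuinely an element of $\Z^1_N(H,I)$ (and not merely a $1$-cocycle for one of the two operations), and that $\omega(\mathcal E)$ lands in $\Ho^2_N$ rather than in a larger group of cocycle pairs, are the places where the definitions from Section 3 must be invoked carefully; the rest is a faithful translation of Wells' original argument.
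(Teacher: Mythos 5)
Your proposal is correct and follows essentially the same route as the paper: define $\rho(\mathcal{E})(\Phi)=(\phi,\theta)$ and verify the conjugation relations placing the image in $\C_{(\nu,\sigma)}$, identify $\ker\rho(\mathcal{E})$ with $\Z^1_N(H,I)$ via $\Phi(y+s(h))=y+d(h)+s(h)$, define $\omega(\mathcal{E})$ as the difference $[(\phi,\theta)\cdot\mathcal{E}]-[\mathcal{E}]$ read through the bijection of Theorem A, and prove $\IM(\rho(\mathcal{E}))=\Ker(\omega(\mathcal{E}))$ together with the derivation identity. The only deviation is the convention $\theta$ versus $\theta^{-1}$ in the twisted cocycle, which does not affect the argument.
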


We remark that the fundamental exact sequence in group theoretical setting has been  revisited, reformulated and applied by many authors in the past, for example, see \cite{J07, JL10, PSY10, R13}. The exposition which we  present here is analogous to the one in \cite{JL10}. A unified treatment of the fundamental exact sequence of Wells with various applications is carried out in all fine details in \cite[Chapter 2]{PSY18}. A similar exact sequence for cohomology, extensions and automorphisms of quandles was constructed in \cite{BS20}.

We close this section with a quick layout of the article. Section 2 contains basic definitions and observations on left skew braces. In Section 3 we define the second cohomology group,  develop extension theory of left skew braces and establish  connection between these two, which enables us to prove  Theorem A  (as  Theorem \ref{gbij-thm sb}). An explicit construction is carried out for left braces of order $8$. In Section 4, generalising some ideas developed in \cite{LV16}, we define general cohomology theory of left skew braces in a particular case, but with non-trivial actions. The last section deals with automorphisms of skew braces, where various actions are exhibited which prepare a road map for the proof of Theorem B (as Theorem \ref{wells7 sb}). Finally, an application of Theorem B is exhibited, inducibility of a pair of automorphisms of skew braces is discussed in module theoretic language and a reduction argument for lifting and extension of automorphisms of nilpotent skew braces is presented.

\section{Preliminaries}

An algebraic structure $(E, + , \circ)$ is said to be a \emph{left skew brace} if $(E,  +)$ and $(E, \circ)$ are a group and,  the following compatibility condition holds:
\begin{equation}\label{bcomp}
a \circ (b  + c ) = a \circ b   - a +  a\circ c.
\end{equation}
for all $ a, b , c \in E$. 
Notice that the identity element $0$ of $(E,  +)$ coincides with the identity element  of $(E, \circ)$. A right skew brace can be defined analogously. In this article we'll only consider left skew braces and may take the liberty of using  `skew brace'  for  `left skew brace' at various places. 

For a left skew brace $E$ and $a \in E$, define a map $\lambda_a : E \to E$ by
$$\lambda_a(b) = - a + (a \circ b)$$
for all $b \in E$. The automorphism group  of a group $G$ is denoted by $\Aut(G)$. We have the following  result which was proved by Rump \cite{WR07} in linear cycle set settings.
\begin{lemma}
For each $a \in E$, the map $\lambda_a$ is an automorphism of $(E, +)$ and the map $\lambda : (E, \circ) \to \Aut(E, +)$ given by $\lambda(a) = \lambda_a$ is a group homomorphism.
\end{lemma}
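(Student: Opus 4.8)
The plan is to deduce everything from the single compatibility condition \eqref{bcomp}, in three short steps carried out in a fixed order: additivity of each $\lambda_a$, then the composition law $\lambda_a\lambda_b=\lambda_{a\circ b}$, and finally bijectivity of $\lambda_a$ as a formal consequence.

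First I would check that $\lambda_a$ is an endomorphism of the group $(E,+)$. Rewriting \eqref{bcomp} as $a\circ(b+c)=a\circ b+a\circ c-a$ and subtracting $a$ from both sides gives $\lambda_a(b+c)=(a\circ b-a)+(a\circ c-a)=\lambda_a(b)+\lambda_a(c)$, so $\lambda_a$ is additive; in particular $\lambda_a(0)=0$ and $\lambda_a(-b)=-\lambda_a(b)$, which I will use freely below.

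Next, using this additivity, I would compute, for all $a,b,c\in E$,
$$\lambda_a(\lambda_b(c))=\lambda_a(b\circ c)-\lambda_a(b)=\bigl(a\circ(b\circ c)-a\bigr)-\bigl(a\circ b-a\bigr)=(a\circ b)\circ c-a\circ b=\lambda_{a\circ b}(c),$$
where associativity of $\circ$ is used in the penultimate equality; hence $\lambda_a\circ\lambda_b=\lambda_{a\circ b}$ as self-maps of $E$. Since the additive and multiplicative identities of $E$ coincide, $\lambda_1(b)=1\circ b-1=b$, so $\lambda_1=\Id_E$. Taking $b=a^{-1}$ (the inverse of $a$ in $(E,\circ)$) in the composition law gives $\lambda_a\lambda_{a^{-1}}=\lambda_1=\Id_E=\lambda_{a^{-1}}\lambda_a$, so $\lambda_a$ is bijective with inverse $\lambda_{a^{-1}}$. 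Combined with additivity this shows $\lambda_a\in\Aut(E,+)$, and the composition law $\lambda_{a\circ b}=\lambda_a\lambda_b$ is exactly the assertion that $\lambda\colon(E,\circ)\to\Aut(E,+)$ is a homomorphism of groups.

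I do not expect a genuine obstacle here; the proof is a direct unwinding of the axioms. The only points requiring care are ordering the steps correctly — additivity must precede the composition law, since the computation of $\lambda_a(\lambda_b(c))$ distributes $\lambda_a$ across a difference — and invoking the identification $0=1$ to recognize $\lambda_1$ as the identity map, which is what ultimately forces $\lambda_a$ to be invertible rather than merely an additive endomorphism.
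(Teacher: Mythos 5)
Your proof is correct and complete; the paper itself omits the argument entirely (it just cites Rump), and what you give is exactly the standard derivation: additivity of $\lambda_a$ from the compatibility condition, then $\lambda_a\lambda_b=\lambda_{a\circ b}$ using that additivity, then invertibility via $\lambda_1=\Id$ (which indeed relies on the identification $0=1$). No gaps.
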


A  sub skew brace $I$ of a left  skew brace $E$ is said to be a \emph{left ideal} of $E$ if $\lambda_a(y) \in I$ for all $a \in E$ and $y \in I$.  A left ideal of $E$ is said to be an \emph{ideal} if $(I, \circ)$ is  a normal subgroup of $(E, \circ)$.   An ideal $I$ of $E$ is said to be \emph{central} if $y \circ a = a \circ y = a + y$ for all $a \in A$ and $y \in I$. 

The following is an easy but important observation, which will be used several times in what follows.
\begin{lemma}
Let $E$ be a left skew brace. Then for all $a, b \in E$, the following hold:

(i) $a + b = a \circ \lambda^{-1}_a(b)$.

(ii) $a \circ b = a + \lambda_a(b)$.
\end{lemma}

Let $E_1$ and $E_2$ be two  left skew  braces. A map $f : E_1 \to E_2$ is said to be a \emph{brace homomorphism} if $f(a + b) = f(a) + f(b)$ and $f(a\circ b) = f(a) \circ f(b)$ for all $a, b \in E_1$. A one-to-one and onto brace homomorphism from $E_1$ to itself is called an \emph{automorphism} of $E_1$. The \emph{kernel} of a homomorphism $f : E_1 \to E_2$ is defined to be the subset $\{a \in E_1 \mid f(a) = 0\}$ of $E_1$. It turns out that $\Ker(f)$, the kernel of $f$, is an ideal of $E_1$. The set of all brace automorphisms of a left skew brace $E$, denoted by $\Autb(E)$, is a group.

Let $H$ and $I$ be two left skew braces. By an \emph{extension} of $H$ by $I$, we mean  a left skew brace $E$ with an exact sequence 
$$\mathcal{E} := 0 \to I \stackrel{i}{\to}  E \stackrel{\pi}{\to} H \to 0,$$ 
where $i$ and $\pi$, respectively,  are injective and surjective brace homomorphisms.
An extension $\mathcal{E}$ is said to be a \emph{central extension} if the image of $I$ under $i$ is  a central ideal of $E$. A set map $s : H \to E$ is called a \emph{set-theoretic section} of $\pi$ if $\pi(s(h)) = h$ for all $h \in H$ and $s(0) = 0$. The abbreviation `st-section' will be used for `set-theoretic section' throughout.

Let $\mathcal{E}$ and $\mathcal{E}'$ be two extensions of $H$ by $I$, that is,
$$\mathcal{E} : 0 \to I \stackrel{i}{\to}  E \stackrel{\pi}{\to} H \to 0$$
and $$\mathcal{E}' : 0 \to I \stackrel{i'}{\to}  E \stackrel{\pi'}{\to} H \to 0$$
are exact sequences of skew braces.
The extensions $\mathcal{E}$ and $\mathcal{E}'$ are said to be \emph{equivalent} if there exists a brace homomorphism $\phi : E \to E'$ such that the following diagram commutes:
$$\begin{CD}
 0 @>i>> I @>>> E @>{{\pi} }>> H  @>>> 0\\
 &&  @V{\text{Id}}VV @V{\phi}VV @ VV{ \text{Id}}V \\
 0 @>i'>> I @>>> E^\prime @>{{\pi^\prime} }>> H @>>> 0.
\end{CD}$$
The set of all equivalence classes of extensions of $H$ by $I$ is denoted by $\Ext(H, I)$.

Let $H$ be a left skew brace and $I$ an abelian group viewed as a trivial brace. Notice that $\Autb(I) = \Aut(I)$.  Then $I$ is said to be an \emph{$H$-module} if there exists a group homomorphism $\nu : (H, \circ) \to \Autb(I)$ and  group anti-homomorphisms $\mu : (H, +) \to \Autb(I)$ and  $\sigma : (H, \circ) \to \Autb(I)$. This means that $H$ acts on $I$ from left through $\nu$ and from right through $\mu$ and $\sigma$.  Such a  triplet $(\nu, \mu, \sigma)$ is called an action of $H$ on $I$. For an ideal $I$ of a left skew brace $E$, we get the natural exact sequence
$$ I \hookrightarrow E \overset{\pi} \twoheadrightarrow  H,$$
where $H \cong E/I$. Suppose that the ideal $I$ is a trivial brace. Let $s$ be an st-section of $\pi$. Then we can define a left action $\nu : H \to \Autb(I)$ and  right actions $\mu : (H, +) \to \Autb(I)$ and  $\sigma :  (H, \circ) \to \Autb(I)$ as given in Section 3 by \eqref{action1 sb},\eqref{action2 sb} and \eqref{action3 sb} respectively. Thus $I$ becomes an $H$-module. All $H$-modules $I$, in this paper, will be taken trivial braces. More distinctly, $I$ will always denote a trivial brace throughout.

Let $H$ and $H'$ be  left skew braces, and $I$ and $I'$ be  $H$ and $H'$ modules with $(\nu, \mu, \sigma)$ and  $(\nu', \mu', \sigma')$, respectively, as  actions.
Let $\alpha:H^\prime \rightarrow H$ and $\zeta: I \rightarrow I^\prime$ be brace homomorphisms.   The pair $(\alpha, \zeta)$ is said to be  \emph{compatible} with the triplet of actions $(\nu, \mu, \sigma)$ and $(\nu^\prime, \mu', \sigma')$ if the following diagram commutes for both left and right actions:

\begin{equation}\label{comp-homo}
\begin{CD}
 H@.\times    @.   I   @>{(\nu, \, \mu, \, \sigma)}>> I\\
 @A{\alpha}AA   @. @VV{\zeta}V @VV{\zeta}V\\
  {H^\prime}@.\times@. {I^\prime} @>{(\nu', \, \mu',  \,\sigma')}>>{I^\prime}.
\end{CD}
\end{equation}
More precisely, $(\alpha, \zeta)$ is  compatible with the actions, if 
$$\zeta(\nu_{\alpha(h^\prime)}(y))=\nu^\prime_{h^\prime}(\zeta(y)),$$
 $$\zeta(\mu_{\alpha(h^\prime)}(y))=\mu^\prime_{h^\prime}(\zeta(y))$$
 and
$$\zeta(\sigma_{\alpha(h^\prime)}(y))=\sigma^\prime_{h^\prime}(\zeta(y))$$
for all $h' \in H'$ and $y \in I$.
 Let $I$ and $I^\prime$ be two $H$-modules with  actions $(\nu, \mu, \sigma)$ and $(\nu^\prime, \mu', \sigma^\prime)$ respectively.  A map $\zeta: I \rightarrow I^\prime$ is called \emph{$H$-module homomorphism} if $(\Id, \zeta)$ is  compatible with the pairs of actions $(\nu, \mu, \sigma)$ and $(\nu^\prime, \mu', \sigma^\prime)$, where $\Id : H \to H$ is the identity homomorphism.


\section{Second  Cohomology and Extensions of Skew Braces}

Let $H$ be a left skew brace and $I$  an abelian group. Let  $\nu: (H, \circ) \rightarrow Aut (I, +)$ be a homomorphism and   $ \mu: (H, +) \rightarrow Aut(I, +)$  and $\sigma : (H, \circ) \rightarrow Aut(I, +)$ be anti-homomorphisms satisfying the following conditions
\begin{eqnarray*}
\mu_{-h_{1} + (h_1 \circ h_3)}(y) &= & - \;\nu_{h_1 \circ (h_2 + h_3)}(\sigma_{h_2 + h_3}(\nu^{-1}_{h_1}(y)))+ \mu_{-h_1 + (h_1\circ h_3)}(\nu_{h_1 \circ h_2 }(\sigma_{h_2}(\nu^{-1}_{h_1}(y))))\\
& & + \; \nu_{h_1 \circ h_3}(\sigma_{h_3}(\nu^{-1}_{h_1}(y)))\\
\mu_{-h_1 + (h_1 \circ h_3)}(\nu_{h_1}(y))  & =& \nu_{h_1}(\mu_{h_3}(y)),
\end{eqnarray*}
where $\nu_h, \mu_h$ and $\sigma_h$ denote the image of $h$ under $\nu, \mu$ and $\sigma$.
Such a triplet $(\nu, \mu, \sigma)$ will be called \emph{a good triplet of actions} of $H$ on $I$.

 For $i \ge 0$, $j \ge 1$, let $\Fun(H^{i+j}, I)$ denote the abelian group of all functions from $H^{i+j}$ to $I$ and $C_{N}^{ij} := C_{N}^{ij}(H, I)$  denote the subgroup of $\Fun(H^{i+j}, I)$ consisting of all functions  which  vanish on all degenerate tuples. A tuple $(h_1, \ldots, h_n) \in H^n$ is said to be \emph{degenerate} if $h_i = 0$ for at least one $i$, $1 \le i \le n$. Set  $C_N^{n} := \bigoplus_{i+j=n} C_N^{i,j}$. We are mainly interested in small values of $n$, that is, $n \le 3$.  To be more precise, we take $C_N^1 = C_N^{0,1}$, $C_N^2 = C_N^{0,2} \oplus C_N^{1,1}$ and $C_N^3 = C_N^{0,3} \oplus C_N^{1,2} \oplus C_N^{2,1}$.
 
  Define   $\partial^1 : C_N^1 \rightarrow C_N^2$ by  $ \partial^1 (\theta) =  (g, f)$, where $\theta \in C_N^1$ and for $h_1, h_2 \in H$,
\begin{eqnarray*}
 g(h_1, h_2) &=&  \theta(h_2) - \theta(h_1 + h_2) + \mu_{h_2}(\theta(h_1)),\\
f(h_1, h_2) &= & \nu_{h_1}(\theta(h_2)) - \theta(h_1 \circ h_2) + \nu_{h_1 \circ h_2}(\sigma_{h_2}(\nu_{h_1}^{-1}(\theta(h_1)))).
 \end{eqnarray*}
Next define  $\partial^2 : C^2_N \rightarrow C^3_N$ by
\begin{equation}\label{del2equ}
 \partial^2(f,g)= \big(\hspace{.1cm} \partial^{0,2}_v(g),\hspace{.1cm} \partial^{0,2}_h(g) - \; \partial^{1,1}_v(f), \; \partial^{1,1}_h(f)\big),
 \end{equation}
 where $(g, f) \in C_N^2$ and, for $h_1, h_2, h_3 \in H$,  $ \partial^{0,2}_v :  C_N^{0,2} \to C_N^{0,3}$, $ \partial^{0,2}_h :  C_N^{0,2} \to C_N^{1,2}$, $ \partial^{1,1}_v :  C_N^{1,1} \to C_N^{1,2}$ and $ \partial^{1,1}_h :  C_N^{1,1} \to C_N^{2,1}$ are defined by
\begin{eqnarray*}
 \partial^{0,2}_v(g)(h_1, h_2, h_3) &=&  g(h_2, h_3) - g(h_1 + h_2, h_3)+ g(h_1, h_2 + h_3) -\mu_{h_3}( g(h_1, h_2)),\\
\partial^{0,2}_h(g)(h_1, h_2, h_3) &=&   \nu_{h_1}(g(h_2, h_3)) +\mu_{h_1 \circ h_3}(g(h_1, - h_1))-\mu_{h_1 \circ h_3}(g(h_1 \circ h_2, - h_1))\\
& & -g((h_1 \circ h_2) - h_1, h_1 \circ h_3),\\
\partial^{1,1}_v(f)(h_1, h_2, h_3) &=& \mu_{- h_1+ (h_1 \circ h_3)}( f(h_1, h_2)) - f(h_1, h_2 + h_3)  + f(h_1, h_3),\\
\partial^{1,1}_h(f)(h_1, h_2, h_3) &=& \nu_{h_1}(f(h_2, h_3)) - f(h_1 \circ h_2, h_3) + f(h_1, h_2 \circ h_3) - \nu_{h_1 \circ h_2 \circ h_3} (\sigma_{h_3}(\nu^{-1}_{h_1 \circ h_2}f(h_1, h_2))).
\end{eqnarray*}

With this setting, we now prove
\begin{lemma}
$\im( \partial^1) \subseteq \Ker(\partial^2)$.
\end{lemma}
\begin{proof}
Let $\theta \in C_N^1$ such that  $\partial^1(\theta) = (g, f)$.  It is easy to see that  $\partial^{0,2}_v(g)   = 0 = \partial^{1,1}_h(f)$. So, it suffices to show that
 $\partial^{0,2}_h(g) -  \partial^{1,1}_v(f) = 0$.  For $h_1, h_2, h_3 \in H$, we have 
\begin{eqnarray*}
(\partial^{0,2}_h(g) - \partial^{1,1}_v(f))(h_1, h_2, h_3)  &=&   \nu_{h_1}\big(\theta(h_3)-\theta(h_2 +h_3) + \mu_{h_3}(\theta(h_2))\big)  +  \mu_{h_1 \circ h_3}\big( \theta(- h_1)\\
&& + \; \mu_{- h_1}(\theta(h_1))\big) - \mu_{h_1 \circ h_3}\big(\theta(- h_1) - \theta(h_1 \circ h_2 - h_1) + \mu_{-h_1}(h_1 \circ h_2)\big)\\
&&  - \;   \theta(h_1 \circ h_3) + \theta(h_1 \circ (h_2 + h_3)) -   \mu_{h_1 \circ h_3}(\theta(h_1 \circ h_2 - h_1))\\
&& - \; \mu_{-h_1 + (h_1 \circ h_3)}\big(\nu_{h_1}(\theta(h_2)) - \theta(h_1 \circ h_2) + \nu_{h_1 \circ h_2}(\sigma_{h_2}(\nu_{h_1}^{-1}(\theta(h_1))))\big)\\
&& + \; \nu_{h_1}(\theta (h_2+h_3))-\theta(h_1 \circ (h_2+h_3))\\
&& + \; \nu_{h_1 \circ (h_2 + h_3)}(\sigma_{h_2 + h_3}(\nu_{h_1}^{-1}(\theta(h_1)))) - \nu_{h_1}(\theta(h_3)) \\
&&  + \; \theta(h_1 \circ h_3)- \nu_{h_1 \circ h_3}(\sigma_{h_3}(\nu^{-1}_{h_1}(\theta(h_1))))
\end{eqnarray*}

Further solving we get 
\begin{eqnarray*}
(\partial^{0,2}_h(g) - \partial^{1,1}_v(f))(h_1, h_2, h_3) &=& \nu_{h_1}(\mu_{h_3}(\theta(h_2))) - \mu_{-h_1 + (h_1 \circ h_3)}(\nu_{h_1}(\theta(h_2)))\\
&& + \; \nu_{h_1 \circ (h_2 + h_3)}(\sigma_{h_2 + h_3}(\nu_{h_1}^{-1}(\theta(h_1)))) - \nu_{h_1 \circ h_3}(\sigma_{h_3}(\nu^{-1}_{h_1}(\theta(h_1)))\\
&& -\mu_{-h_1 +( h_1 \circ h_3)}(\nu_{h_1 \circ h_2}(\sigma_{h_2}(\nu_{h_1}^{-1}(\theta(h_1))))) + \mu_{-h_1 + (h_1 \circ h_3)}(\theta(h_1))
\end{eqnarray*} 
which is zero because $(\nu, \mu, \sigma)$ is a good triple of actions.
\hfill $\Box$

\end{proof}

Let $\Z_N^2(H, I) := \Ker (\partial^2)$ and $\B_N^2(H, I) := \im(\partial^{1})$.  Define $\Ho^2_N(H, I) :=  \Z_N^2(H, I)/\B_N^2(H, I)$ to be the second cohomology group of $H$ with co-efficients in $I$ corresponding to the given good triplet of actions $(\nu, \mu, \sigma)$. To avoid the cumbersome notation, underlying good triplet of action is omitted in $\Ho^2_N(H, I)$, which, indeed, heavily depends on it. Two different good triplets of actions of $H$ on $I$  may very well give rise to different cohomology groups as shown in the explicit constructions at the end of this section.

Elements of $\Z_N^2(H, I)$ and $\B_N^2(H, I)$ are, respectively,  called \emph{$2$-cocycles} and \emph{$2$-coboundaries}. Two $2$-cocycles $(\beta_1, \tau_1)$ and $(\beta_2, \tau_2)$ are said to be \emph{cohomologous} if $(\beta_1, \tau_1) - (\beta_2, \tau_2) \in \im(\partial^1)$; more precisely, if there exists a $\theta \in C_N^1$ such that 
$$ (\beta_1-\beta_2)(h_1, h_2)=  \theta(h_2) - \theta(h_1 + h_2) + \mu_{h_2}(\theta(h_1))$$
 and
$$(\tau_1-\tau_2)(h_1, h_2)=  \nu_{h_1}(\theta(h_2)) - \theta(h_1 \circ h_2) + \nu_{h_1 \circ h_2}(\sigma_{h_2}(\nu_{h_1}^{-1}(\theta(h_1))))$$
for all $h_1, h_2 \in H$. The elements of $Z^1_N(H, I) := \Ker(\partial^1)$ are  called \emph{derivations} or \emph{$1$-cocycles} or \emph{crossed homomorphisms}.

We remark that when $H$ is a left brace, i.e., $(H, +)$ is abelian, we can constitute the following zero-sequence:
$$C_N^0 \stackrel{\partial^0}{\longrightarrow} C_N^1  \stackrel{\partial^1}{\longrightarrow}  C_N^2 \stackrel{\partial^2}{\longrightarrow} C_N^3,$$
where $C_N^0 := I_{\nu} = \{y \in I \mid \nu_h(y) = y \mbox{ for all } h \in H\}$ and 
 $\partial^0 : C_N^0 \rightarrow C_N^1$ is defined by $\partial^0(y) =  f_y$ such that 
 $$f_y(h)= \nu_h(\sigma_h(y))-y.$$

The next result follows from the definitions.
\begin{lemma}\label{ext-lemma2}
Let $(g, f) \in C_N^2$. Then, for all $h_1, h_2 \in H$, the following hold:

(i) $g(h_1, 0) = g(0, h_2) = 0$.

(ii) $f(h_1, 0) = f(0, h_2) = 0$.
\end{lemma}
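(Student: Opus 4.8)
The plan is to derive all three identities directly from the defining conditions on $C_N^{ij}$, namely that an element of $C_N^{0,2}$ or $C_N^{1,1}$ must vanish on degenerate tuples and that its linearisation must vanish on all partial shuffles. Recall that for $(g,f) \in C_N^2 = C_N^{0,2} \oplus C_N^{1,1}$ the ``coordinates'' $g \in C_N^{0,2}$ and $f \in C_N^{1,1}$ are functions on $H^2$, and the degeneracy condition is precisely the statement that $g$ and $f$ vanish whenever one of their arguments is $0$. Hence parts (ii) and (iii) are immediate: $(h_1,0)$ and $(0,h_2)$ are degenerate tuples in $H^2$, so $g(h_1,0) = g(0,h_2) = 0$ and likewise $f(h_1,0) = f(0,h_2) = 0$ by the definition of $C_N^{0,2}$ and $C_N^{1,1}$ respectively.

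For part (i), the symmetry $g(h_1,h_2) = g(h_2,h_1)$, the relevant structure is the partial shuffle condition applied to $g \in C_N^{0,2} = C_N^{0,2}(H,I)$. Here $i = 0$, $j = 2$, and the only admissible value of $r$ is $r = 1$. The set $\Sh_{1,1}$ of $(1,1)$-shuffles of $\{1,2\}$ consists of the identity permutation and the transposition, with signs $+1$ and $-1$ respectively, so the corresponding partial shuffle evaluated at $(h_1,h_2)$ is $(h_1, h_2) - (h_2, h_1)$ (as a formal sum on which the linearisation of $g$ is required to vanish). The requirement that the linearisation of $g$ vanish on this partial shuffle says exactly that $g(h_1,h_2) - g(h_2,h_1) = 0$, which is (i). I would spell out carefully what ``linearisation'' means here so that the reader sees that evaluating it on the single partial shuffle $(h_1,h_2) - (h_2,h_1)$ yields $g(h_1,h_2) - g(h_2,h_1)$; this is the one spot where one must be slightly careful about conventions, since $j = 2$ is the smallest case and the shuffle has only two terms.

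There is no real obstacle here; the lemma is essentially an unwinding of definitions, as the paper itself indicates (``follows from the definitions''). The only point requiring a moment's attention is reconciling the general partial-shuffle formalism, stated for arbitrary $i, j, r$, with the degenerate case $i=0$, $j=2$, $r=1$ that is actually in play for (i) — one must check that the formalism, restricted to this case, really does impose symmetry of $g$ and does not, say, impose a trivial condition. I would therefore structure the proof as: first observe (ii) and (iii) from the vanishing-on-degenerate-tuples clause; then compute $\Sh_{1,1}$ explicitly, record the associated partial shuffle, and read off (i) from the linearisation-vanishing clause. I expect the whole argument to be three or four lines.
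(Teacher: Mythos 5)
Your proposal is correct and matches the paper's intent exactly: the paper offers no written proof beyond the remark that the lemma ``follows from the definitions,'' and your unwinding — (ii) and (iii) from the vanishing-on-degenerate-tuples clause, and (i) from the $\Sh_{1,1}$ partial shuffle $(h_1,h_2)-(h_2,h_1)$ applied to $g\in C_N^{0,2}$ — is precisely the intended argument. Your side remark that $f\in C_N^{1,1}$ carries no shuffle condition (since $j=1$ admits no $r$ with $1\le r\le j-1$) is also the right observation, which is why no symmetry is claimed for $f$.
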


\vspace{.2in}

 Let $(\beta, \tau) \in C_N^2$. Define on $H \times I$, the following operations:
\begin{eqnarray}
(h_1, y_1) + (h_2, y_2) &=& \big(h_1 + h_2, \mu_{h_2}(y_1)+y_2+\beta(h_1, h_2)\big).\\
(h_1, y_1) \circ (h_2, y_2) &= & \big(h_1 \circ h_2, \nu_{h_1 \circ h_2}(\sigma_{h_2}(\nu_{h_1}^{-1}(y_1)))+\nu_{h_1}(y_2)+ \tau(h_1, h_2)\big).
\end{eqnarray}\

Then we have
\begin{thm}\label{ext-thm sb}
Let $(\nu, \mu, \sigma)$ be a good triplet of actions of $H$ on $I$. Then $H \times I$ takes the structure of a left skew brace under the operations defined in the preceding para if and only if $(\beta, \tau) \in Ker(\partial^2)$. 
\end{thm}
\begin{proof}
It is not difficult to show that the operations $'+'$ and $'\circ'$  in $H \times I$ are associative if and only if $\partial^{0,2}_v(\beta)$ and  $\partial^{1,1}_h(\tau)$, respectively, vanish.   Details are left for the reader. For $h_i \in H$ and $y_i \in I$, $1 \le i \le 3$, we have

\begin{eqnarray*}
(h_1, y_1) \circ ((h_2, y_2) + (h_3, y_3)) & =& (h_1, y_1) \circ (h_2+h_3,\mu_{h_3}(y_2)+y_3+\beta(h_2, h_3))\\
&=& (h_1 \circ (h_2 + h_3), \nu_{h_1 \circ (h_2+h_3)}(\sigma_{h_2+ h_3}(\nu_{h_1}^{-1}(y_1))))\\
& & + \; \nu_{h_1}(\mu_{h_3}(y_2)+y_3+\beta(h_2, h_3)) + \tau(h_1, h_2+h_3))
\end{eqnarray*}
and 
\begin{eqnarray*}
 (h_1, y_1) \circ (h_2, y_2) -(h_1, y_1)+(h_1, y_1) \circ (h_3, y_3)
&=& (h_1 \circ h_2, \nu_{h_1 \circ h_2}(\sigma_{h_2}(\nu_{h_1}^{-1}(y_1)))+\nu_{h_1}(y_2)\\
& & +  \; \tau(h_1, h_2)) +  (-h_1, -\mu_{-h_1}(y_1)+\beta(h_1, -h_1))\\
&& + \; (h_1 \circ h_3, \nu_{h_1 \circ h_3}(\sigma_{h_3}(\nu_{h_1}^{-1}(y_1))) \\
&& + \;  \nu_{h_1}(y_3)+ \tau(h_1, h_3))\\
&=& \big(h_1 \circ (h_2+h_3), \mu_{-h_1 + (h_1 \circ h_2)}(\nu_{h_1 \circ h_2}(\sigma_{h_2}(\nu^{-1}_{h_1}(y_1))))\big)\\
&  & + \; \nu_{h_1}(y_2) +\tau(h_1, h_2)-\mu_{-h_1}(y_1)-\beta(h_1, -h_1)\\
&& + \; \nu_{h_1 \circ h_3}(\sigma_{h_3}(\nu^{-1}_{h_1}(y_1))) + \nu_{h_1}(y_3)+\tau(h_1, h_3).
\end{eqnarray*}
Considering the fact that $(\nu, \mu, \sigma)$ is a  good triplet of actions, 
 $$(h_1, y_1) \circ ((h_2, y_2) + (h_3, y_3)) = (h_1, y_1) \circ (h_2, y_2) -(h_1, y_1)+(h_1, y_1) \circ (h_3, y_3)$$
   if and only if $\partial^{0,2}_h(g) -  \partial^{1,1}_v(f) = 0$. The proof is now complete.
\hfill $\Box$

\end{proof}

The left skew brace  structure on $H \times I$ (as established  in the preceding theorem) is an extension of $H$ by $I$, which we denote by the $7$-tuple data $(H, I, \nu, \mu, \sigma, \beta, \tau)$ with the  exact sequence
$$0  \to  I \stackrel{i}{\to} (H, I, \nu, \mu, \sigma,  \beta, \tau) \stackrel{\pi}{\to} H \to 0,$$ 
where $i(y) = (0, y)$ and $\pi(h, y) = h$. \emph{Throughout the paper, for such  extensions, we'll always take a fixed st-section $s : H \to (H, I, \nu, \mu, \sigma,  \beta, \tau)$ of $\pi$  given by $s(h) = (h, 0)$}.  \\

For a given left skew brace $H$ acting on a trivial brace $I$ such that the triplet of actions is a good triplet of actions of $H$ on $I$, we have shown the existence of an extension of $H$ by $I$. 
We'll now show that the existence of a good triplet of actions comes naturally  with a given extension of a left skew brace by a trivial brace. Let  $0 \to I \rightarrow E  \stackrel{\pi}{\rightarrow}  H \to 0$
be an extension of a left skew brace $H$ by a trivial brace $I$. For the ease of notation, we shall view $I$ as an ideal of $E$ through the given embedding. 
Let $s : H \rightarrow E$ be  any st-section of $\pi$. Then, for all $ h \in H$ and $y \in I$, we  define  $\nu, \sigma : (H, \circ) \rightarrow Aut(I, +)$ and $\mu :(H, +) \rightarrow Aut(I, +)$  by $h \mapsto \nu_h$, $h \mapsto \mu_h$ and $h \mapsto \sigma_h$, where
\begin{equation}\label{action1 sb}
\nu_h(y) := - s(h) + (s(h) \circ y) = \lambda_{s(h)}(y),
\end{equation}
 
 \begin{equation}\label{action2 sb}
 \mu_h(y) := - s(h) + y + s(h)
 \end{equation}
 
 and 
 
 \begin{equation}\label{action3 sb}
 \sigma_h(y) := s(h)^{-1} \circ y \circ s(h).
 \end{equation}
 It is not difficult to see  that, for a given st-section $s$,  $\nu$ is  a homomorphism and $\mu, \sigma $ are  anti-homomorphisms. 
 
 Next, for the extension  $0 \to I \rightarrow E  \stackrel{\pi}{\rightarrow}  H \to 0$,  define maps $\beta,  \tau : H \times H \rightarrow I$ by 
\begin{equation}\label{cocycle1 sb}
\beta(h_1, h_2) := - s(h_1 + h_2) + s(h_1) + s(h_2)
\end{equation}
and 
\begin{equation}\label{cocycle2 sb}
 \tau(h_1, h_2):= \nu_{h_1 \circ h_2} \big(s(h_1 \circ h_2)^{-1} \circ s(h_1) \circ s(h_2) \big) =  - s(h_1 \circ h_2) + (s(h_1) \circ s(h_2)),
 \end{equation}
 where $\nu$ is defined in \eqref{action1 sb}.
 
 We now establish a relationship among the actions (defined in \eqref{action1 sb} - \eqref{action3 sb}) and  $\beta$ and $\tau$ (defined in \eqref{cocycle1 sb} and \eqref{cocycle2 sb}). For $h_1, h_2, h_3 \in H$, on the one hand we have
\begin{eqnarray*}
s(h_1) \circ (s(h_2) + s(h_3)) & = &s(h_1) \circ \big(s(h_2 + h_3) + \beta(h_2, h_3)\big)\\
&= & s(h_1) \circ s(h_2 + h_3) - s(h_1) +  s(h_1) \circ \beta(h_2, h_3)\\
&=& s(h_1 \circ (h_2 + h_3)) + \tau(h_1, h_2 + h_3)+\nu_{h_1}(\beta(h_2, h_3))
\end{eqnarray*}
and on the other hand 
\begin{eqnarray*}
s(h_1) \circ (s(h_2) + s(h_3)) & = & (s(h_1) \circ s(h_2)) - s(h_1) + (s(h_1) \circ s(h_3))\\
&=& s(h_1\circ h_2) + \tau(h_1, h_2) + s(-h_1) - \beta(h_1, - h_1) + s(h_1 \circ h_3) + \tau(h_1, h_3)\\
& =& s(h_1\circ h_2 - h_1) + \beta(h_1 \circ h_2, -h_1) + s(h_1 \circ h_3)\\
 & &+ \; \mu_{-h_1 + (h_1 \circ h_3)}(\tau(h_1, h_2)) - \mu_{h_1 \circ h_3}(\beta(h_1, - h_1)) + \tau(h_1, h_3)\\
 &=& +\; s(h_1 \circ (h_2 + h_3)) +  \beta(h_1 \circ h_2 - h_1, h_1 \circ h_3) + \mu_{h_1 \circ h_3}(\beta(h_1 \circ h_2, -h_1))\\
&& + \;  \mu_{-h_1 + (h_1 \circ h_3)}(\tau(h_1, h_2))  - \mu_{h_1 \circ h_3}(\beta(h_1, - h_1)) + \tau(h_1, h_3).
\end{eqnarray*}
Hence,  comparing right hand sides of the preceding equations, we get
\begin{eqnarray}\label{mutual1}
\mu_{- h_1 + (h_1 \circ h_3)}(\tau(h_1, h_2)) - \tau(h_1, h_2 + h_3) +\tau(h_1, h_3) &=& \nu_{h_1}(\beta(h_2, h_3)) + \mu_{h_1 \circ h_3}(\beta(h_1, - h_1)) \nonumber \\ 
& & - \; \mu_{h_1 \circ h_3}(\beta(h_1 \circ h_2, - h_1)) \nonumber  \\ 
& &  - \; \beta\big((h_1 \circ h_2) - h_1, h_1 \circ h_3\big).
\end{eqnarray}

With this setting, we have
\begin{prop}\label{well-def-act-coc}
 Let $0 \to I \stackrel{}{\to} E \stackrel{\pi}{\rightarrow}  H \to 1$ be an extension of a left skew brace $H$ by a trivial brace $I$. Then the following hold:
 
 (1) The actions $\nu$, $\mu$ and $ \sigma$ are independent of the choice of an st-section. Moreover, the triplet $(\nu, \mu,\sigma)$ is a good triplet of actions of $H$ on $I$.
 
(2) Equivalent extensions have the same triplet of actions.

(3) The pair $(\beta, \tau) \in \Ker(\partial^2)$, where $\partial^2$ is defined in \eqref{del2equ}.  Moreover, the cohomology class of $(\beta, \tau)$ is independent of the choice of an st-section of $\pi$.
\end{prop}
\begin{proof}
That $\mu$ and $\sigma$ are independent of an st-section, follows from group theoretic constructions (see \cite[Section 11.1]{DJSR}). We'll only show that $\nu$ is independent of an st-section. Let $s_1$ and $s_2$ be two st-sections and $\nu$ and $\nu^\prime$, respectively,  be the corresponding actions. We know that for each $h \in H$ there exist $y_h \in I$ such that  $s_2(h) = y_h \circ s_1(h)$. Then we have  
\begin{equation*}
\begin{split}
\nu^\prime_h(y) & = \lambda_{s_2(h)}(y)\\
&=\lambda_{y_h \circ s_1(h)}(y)\\
&=\lambda_{y_h}(\lambda_{s_1(h)}(y))\\
&=\nu_h(y) \ \  \mbox{  (since $I$ is a trivial brace)},
\end{split}
\end{equation*}
which shows that $\nu$ is independent of a given st-section.

Notice that $s(h) \circ y = s(h) + \nu_h(y)$ and $s(h) + y = s(h) \circ \nu^{-1}_h(y)$ for $h \in H$ and $y \in I$.  For $h_i \in H$, $y \in I$, $ 1 \le i \le 3$, on the one hand we have
\begin{eqnarray*}
(s(h_1) + y) \circ \big(s(h_2)  +  s(h_3)\big) &=& (s(h_1) \circ \nu^{-1}_{h_1}(y)) \circ \big(s(h_2 + h_3) + \beta(h_2, h_3)\big)\\
&=& s(h_1) \circ s(h_2 + h_3) \circ \sigma_{h_2 \circ h_3}(\nu_{h_1}^{-1}(y)) \circ \nu^{-1}_{h_2+h_3}(\beta(h_2, h_3))\\
&=& s(h_1 \circ (h_2 + h_3)) +  \tau(h_1, h_2 + h_3)+ \nu_{h_1 \circ (h_2 + h_3)}(\sigma_{h_2 + h_3}(\nu^{-1}_{h_1}(y)))\\
&& + \; \nu_{h_1}\big(\beta(h_2, h_3))
\end{eqnarray*}
 and on the other hand
 \begin{eqnarray*}
(s(h_1) + y) \circ \big(s(h_2) + s(h_3)\big) &=& (s(h_1) + y) \circ s(h_2) - (s(h_1) + y) +   (s(h_1) + y_1) \circ s(h_3)\\
&=& (s(h_1) \circ \nu^{-1}_{h_1}(y)) \circ s(h_2)  - y + s(-h_1) - \beta(h_1, -h_1)\\
& &  +\;  (s(h_1) \circ \nu^{-1}_{h_1}(y)) \circ s(h_3)\\
&=& s(h_1 \circ h_2) + \tau(h_1, h_2) + \nu_{h_1 \circ h_2 }(\sigma_{h_2}(\nu^{-1}_{h_1}(y))) + - y + s(-h_1)\\
&& - \; \beta(h_1, -h_1) + s(h_1 \circ h_3) + \tau(h_1, h_3) + \nu^{-1}_{h_1 \circ h_3 }(\sigma_{h_3}(\nu^{-1}_{h_1}(y)))\\
&=& s(h_1 \circ h_2) + s(-h_1) + s(h_1 \circ h_3) + \mu_{h_1\circ h_3}(\mu_{-h_1}(\tau(h_1, h_2)))\\
& & +  \; \mu_{-h_1 + (h_1\circ h_3)}(\nu_{h_1 \circ h_2 }(\sigma_{h_2}(\nu^{-1}_{h_1}(y)))) - \mu_{-h_1 + (h_1 \circ h_3)}(y)\\
&&  - \; \mu_{h_1 \circ h_3}(\beta(h_1, -h_1)) + \tau(h_1, h_3) + \nu_{h_1 \circ h_3 }(\sigma_{h_3}(\nu^{-1}_{h_1}(y)))\\
&=&  s(h_1 \circ (h_2 + h_3)) + \beta(h_1 \circ h_2 - h_1, h_1 \circ h_3)\\
&&  +\; \mu_{h_1 \circ h_3}(\beta(h_1 \circ h_2 - h_1)) +  \mu_{-h_1 + (h_1\circ h_3)}(\tau(h_1, h_2))\\
& & +  \; \mu_{-h_1 + (h_1\circ h_3)}(\nu_{h_1 \circ h_2 }(\sigma_{h_2}(\nu^{-1}_{h_1}(y)))) - \mu_{-h_1 + (h_1 \circ h_3)}(y)\\
&&  - \; \mu_{h_1 \circ h_3}(\beta(h_1, -h_1)) + \tau(h_1, h_3) + \nu_{h_1 \circ h_3 }(\sigma_{h_3}(\nu^{-1}_{h_1}(y))).
 \end{eqnarray*}
Equating the right hand sides of the preceding two expressions and using \eqref{mutual1}, we get
\begin{eqnarray*}
\mu_{-h_{1} + (h_1 \circ h_3)}(y) &= & - \;\nu_{h_1 \circ (h_2 + h_3)}(\sigma_{h_2 + h_3}(\nu^{-1}_{h_1}(y)))+ \mu_{-h_1 + (h_1\circ h_3)}(\nu_{h_1 \circ h_2 }(\sigma_{h_2}(\nu^{-1}_{h_1}(y))))\\
& & + \; \nu_{h_1 \circ h_3}(\sigma_{h_3}(\nu^{-1}_{h_1}(y))).
\end{eqnarray*}
Finally, an easy direct computations shows that
$\mu_{-h_1 + (h_1 \circ h_3)}(\nu_{h_1}(y))   = \nu_{h_1}(\mu_{h_3}(y)).$
Hence the triplet $(\nu, \mu,\sigma)$ is a good triplet of actions, which completes the proof of assertion (1).
 
For assertion (2), let $E$ and $E^\prime$ be two equivalent extensions of $H$ by $I$. Then there exists a brace homomorphism $\phi : E \rightarrow  E^\prime$ such that the following  diagram commutes:
$$\begin{CD}
 0 @>>> I @>>> E @>{{\pi} }>> H  @>>> 0\\
 &&  @V{\text{Id}}VV @V{\phi}VV @VV{ \text{Id}}V \\
 0 @>>> I @>>> E^\prime @>{{\pi^\prime} }>> H @>>> 0.
\end{CD}$$
Let $s$  be an st-section of  $ \pi$. By the commutativity of the diagram, it follows that $s': H \to E'$, given by  $s^\prime(h) := \phi(s(h))$ for all $ h \in H$, is an st-section of $\pi'$, and $\phi(y)=y$ for all $ y \in I$. Let $(\nu, \mu,  \sigma)$ and $(\nu', \mu', \sigma')$ be actions corresponding to $s$ and $s^\prime$ respectively. Then, using the fact that $\phi$ is a homomorphism, we have
\begin{equation*}
\begin{split}
\nu'_{h}(y)& = - s^\prime(h) + ( s^\prime(h) \circ y)\\
& =  - \phi( s(h)) + ( \phi( s(h) \circ y)\\
& =   - \phi\big(s(h) + ( s(h) \circ y)\big)\\
&=  \phi(\nu_h(y))\\
&=  \nu_h(y)
\end{split}
\end{equation*}
for all $h \in H$ and $y \in I$. Similarly one can easily show that $\mu'_h(y) = \mu_h(y)$ and $\sigma'_h(y) = \sigma_h(y)$ for all $h \in H$ and $y \in I$. Thus,  $(\nu, \mu,  \sigma)$ and  $(\nu', \mu', \sigma')$  are the same actions, which establishes assertion (2).

We now prove assertion (3). Let $s$ be an st-section of $\pi$.  That $\partial^{0,2}_v(\beta) = 0$ and  $\partial^{1,1}_h(\tau) = 0$ follows from the associativity of $'+'$ and $'\circ'$ in $E$ respectively.  Details are left for the reader. That $\partial_h^{0,1}(\beta) - \partial_v^{1,1}(\tau) = 0$ follows from \eqref{mutual1}. This shows that $(\beta, \tau) \in \Ker(\partial^2)$. Finally, again assume that  $s_1$ and $ s_2$ are two st-sections of $\pi$. Let $(\beta, \tau)$ and $ ( \beta^\prime, \tau^\prime)$ be $2$-cocycles corresponding to $s$ and $s_1$ respectively. Notice that  $- s_1(h) + s_2(h) \in I$. Define a map $\theta : H \to I$ by  $\theta(h) = - s_1(h) + s_2(h)$ for all $h \in H$. A straightforward computation then shows that the $2$-cocycles $(\beta, \tau)$ and $(\beta^\prime, \tau^\prime)$ differ by $\partial^1(\theta)$; meaning cohomologous. This completes the proof.
\hfill $\Box$

\end{proof}

Let $\Ext(H,I)$ denote the set of  equivalence classes of all  extensions of $H$ by $I$.  Equivalence class of an extension $\mathcal{E} : 0 \to I \to E \to H \to 0$ is denoted by $[\mathcal{E}]$. As a consequence of the preceding proposition, it follows that each equivalence class of extension of $H$ by $I$ admits a unique good triplet of actions $(\nu, \mu, \sigma)$  of $H$ on $I$, which we call the \emph{corresponding triplet  of actions}. Let $\Ext_{(\nu, \mu, \sigma)}(H, I)$ denote the equivalence class of those extensions of $H$ by $I$ whose corresponding triplet of actions is $(\nu, \mu, \sigma)$.  We can easily establish
 
  \begin{cor}\label{cor-sec3}
 $\Ext(H, I) = \bigsqcup_{(\nu, \mu, \sigma)} \Ext_{(\nu, \mu,\sigma)}(H, I)$.
 \end{cor}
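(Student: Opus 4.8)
The statement to prove is Corollary \ref{cor-sec3}, namely that
$$\Ext(H, I) = \bigsqcup_{(\nu, \sigma)} \Ext_{\nu, \sigma}(H, I),$$
where the disjoint union is over all good pairs of actions of $H$ on $I$.

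The plan is to deduce this purely formally from the Proposition immediately preceding it. The key point is that the Proposition assigns to each extension a pair of actions $(\nu,\sigma)$, and part (1) shows this pair is independent of the chosen st-section, while part (2) shows it depends only on the equivalence class of the extension. Hence there is a well-defined map
$$\Phi : \Ext(H, I) \longrightarrow \{\text{good pairs of actions of } H \text{ on } I\}, \qquad [\mathcal{E}] \mapsto (\nu, \sigma),$$
where $(\nu,\sigma)$ is the corresponding pair of actions of $\mathcal{E}$ (the Proposition also guarantees this pair really is a good pair). By definition, $\Ext_{\nu,\sigma}(H,I) = \Phi^{-1}\big((\nu,\sigma)\big)$ is the fiber of $\Phi$ over $(\nu,\sigma)$.

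Now the statement is just the observation that a set is the disjoint union of the fibers of any function defined on it. First I would note that the sets $\Ext_{\nu,\sigma}(H,I)$, as $(\nu,\sigma)$ ranges over distinct good pairs, are pairwise disjoint: if $[\mathcal{E}]$ lay in both $\Ext_{\nu,\sigma}(H,I)$ and $\Ext_{\nu',\sigma'}(H,I)$, then by uniqueness of the corresponding pair of actions (part (1) of the Proposition, which says the actions are intrinsic to $\mathcal{E}$) we would have $(\nu,\sigma) = (\nu',\sigma')$. Second I would note that every $[\mathcal{E}] \in \Ext(H,I)$ lies in some $\Ext_{\nu,\sigma}(H,I)$, namely the one indexed by its own corresponding pair of actions; this is exactly the content of the Proposition, which produces such a pair from any st-section and shows it is a good pair. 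Conversely each $\Ext_{\nu,\sigma}(H,I) \subseteq \Ext(H,I)$ by definition, so the union is contained in $\Ext(H,I)$. Combining these two inclusions with disjointness gives the displayed equality.

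There is essentially no obstacle here: the corollary is a bookkeeping consequence of the Proposition, so the proof is a two-line argument partitioning $\Ext(H,I)$ by the value of the corresponding-pair-of-actions map. The only mild subtlety worth flagging explicitly is that one must invoke both halves of the Proposition — part (1) for well-definedness and disjointness, and the assertion that $(\nu,\sigma)$ is a good pair so that the index set is indeed the set of good pairs and not some larger set of action pairs — but both are already established above, so nothing new needs to be computed.
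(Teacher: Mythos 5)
Your proof is correct and matches the paper's intent: the paper offers no explicit argument ("We can easily establish"), relying exactly on the preceding Proposition to make the corresponding-pair-of-actions map well defined on equivalence classes, so that $\Ext(H,I)$ is partitioned into the fibers $\Ext_{\nu,\sigma}(H,I)$. Nothing further is needed.
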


We are now ready to prove the main result of this section (Theorem A).
\begin{thm}\label{gbij-thm sb}
Let $H := (H, +, \circ)$ be a left  skew brace and $I := (I, +)$ an abelian group viewed as a trivial brace. Let $(\nu, \mu, \sigma)$ be a good triplet of actions of $H$ on $I$. Then there is a bijection between $\Ho^2_N(H, I)$ and $\Ext_{(\nu, \mu, \sigma)}(H, I)$.
\end{thm}
 \begin{proof}
Let $( \beta, \tau) \in \Z^2_N(H, I)$. It now follows from Theorem \ref{ext-thm sb} that the pair $(\beta, \tau)$ gives rise to an extension $(H, I, \nu, \mu, \sigma,  \beta, \tau)$.  Let $(\beta_1, \tau_1), (\beta_2, \tau_2) \in \Z^2_N(H, I)$ be cohomologous. Then there exists  $~\theta \in C^1_N$ such that
 \begin{equation}\label{cc1 sb}
 (\beta_1-\beta_2)(h_1, h_2)=  \theta(h_2) - \theta(h_1  + h_2) + \mu_{h_2}(\theta(h_1))
\end{equation}
 and
\begin{equation}\label{cc2 sb}
(\tau_1-\tau_2)(h_1, h_2)=  \nu_{h_1}(\theta(h_2)) - \theta(h_1 \circ h_2) + \nu_{h_1 \circ h_2}(\sigma_{h_2}(\nu_{h_1}^{-1}(\theta(h_1)))),
 \end{equation}
 for all $h_1, h_2 \in H$.
 Let $(H, I, \nu, \mu, \sigma,  \beta_1, \tau_1)$ and $(H, I, \nu, \mu, \sigma,\beta_2, \tau_2)$ be the corresponding extensions for $(\beta_1, \tau_1)$ and $(\beta_2, \tau_2)$ respectively.  Define  a map $\phi : (H, I, \nu, \mu, \sigma, \beta_1, \tau_1) \to (H, I, \nu, \mu, \sigma, \beta_2, \tau_2)$ by
 $$\phi(h, y) := (h, y + \theta(h)).$$

 Consider the following diagram:
$$\begin{CD}
0 @>>> I @>i>>  (H, I, \nu, \mu, \sigma, \beta_1, \tau_1)@>{{\pi} }>> H @>>> 0\\ 
&& @V{\text{Id}} VV@V{\phi} VV @V{\text{Id} }VV \\
0 @>>> I @>i^\prime>>  (H, I, \nu, \mu, \sigma, \beta_2, \tau_2) @>{{\pi^\prime} }>> H  @>>> 0.
\end{CD}$$
Notice that the linearity of $\phi$ in $'+'$ and $'\circ'$ follows from the equations \eqref{cc1 sb} and \eqref{cc2 sb} respectively; hence $\phi$ is a brace homomorphism.  It is now obvious that the preceding diagram is commutative. Thus the extensions $(H, I, \nu, \mu,\sigma, \beta_1, \tau_1)$ and $(H, I, \nu, \mu, \sigma, \beta_2, \tau_2)$ are equivalent, and therefore the map $\psi : \Ho^2_N(H, I) \to \Ext_{(\nu, \mu,  \sigma)}(H, I)$ given by
$$\psi([(\beta, \tau)]) = [(H, I, \nu,  \mu, \sigma, \beta, \tau)]$$
is well defined.

We now show that $\psi$ is surjective.  Let $\mathcal{E} : 0 \rightarrow I  \rightarrow E \overset{\pi} \rightarrow H \rightarrow 0$ be any extension whose corresponding triplet of actions is $(\nu, \mu, \sigma)$. Let $s : H \to E$ be an st-section of $\pi$. Then, by Proposition \ref{well-def-act-coc}(3),  it follows that $(\beta, \tau)$ (as defined in \eqref{cocycle1 sb} and \eqref{cocycle2 sb}) lies in  $\Z_N^2(H, I)$.  Applying $\psi$, we have $\psi([(\beta, \tau)]) = [(H, I, \nu, \mu, \sigma, \beta, \tau)]$. Now the surjectivity of $\psi$ is equivalent to showing that  $\mathcal{E}$ and $(H, I, \nu, \mu, \sigma, \beta, \tau)$ are equivalent. More precisely, the cohomology class $[(\beta, \tau)]$ of $(\beta, \tau)$ will be a pre-image of the extension $\mathcal{E}$.  We remark that this mechanism is independent of the choice of an st-section $s$ of $\pi$, because whatever $s$ we start with, we finally get an extension equivalent to $\mathcal{E}$. So, for the surjectivity of $\psi$, we only need to
establish the commutativity of the diagram
$$\begin{CD}
0 @>>> I @>i>>E @>{{\pi} }>> H @>>> 0\\ 
&& @V{\text{Id}} VV@V{\phi} VV @V{\text{Id} }VV \\
0 @>>> I @>i^\prime>> (H, I, \nu, \mu, \sigma, \beta, \tau) @>{{\pi^\prime} }>> H @>>> 0,
\end{CD}$$
in which $\phi$ is a brace homomorphism. As mentioned above, $\pi'(h, y) = h$ and $s'$, given by $s'(h) := (h, 0)$, is an st-section of $\pi'$, which we'll use here. Notice that every $g \in E $ can be uniquely written as  $g= s(h) + y$ for some  $ h \in H$ and $ y \in I$. Define $\phi : E \rightarrow (H, I, \nu, \mu, \sigma, \beta, \tau)$ by 
$$\phi(s(h) + y) = (h, y).$$ 
It is easy to see that  $\phi$ is linear under $'+'$. Notice that $s(h) \circ y = s(h) + \nu_h(y)$ and $s(h) +  y = s(h) \circ \nu^{-1}_h(y)$ for all $h \in H$ and $y \in I$. Let $s(h_1) + y_1$ and $s(h_2) + y_2$ be two elements of $E$. Then
\begin{eqnarray*}
\phi\big((s(h_1) + y_1) \circ (s(h_2) + y_2)\big) &=& \phi\big(s(h_1) \circ \nu^{-1}_{h_1}(y_1) \circ s(h_2) \circ \nu^{-1}_{h_2}(y_2)\big)\\
&=& \phi\big(s(h_1)\circ s(h_2)\circ\sigma_{h_2}(\nu^{-1}_{h_1}(y_1))\circ \nu^{-1}_{h_2}(y_2)\big)\\
&=& \phi\big(s(h_1 \circ h_2) \circ \nu^{-1}_{h_1 \circ h_2}(\tau(h_1, h_2))\circ\sigma_{h_2}(\nu^{-1}_{h_1}(y_1))\circ \nu^{-1}_{h_2}(y_2)\big)\\
&=&  \phi\big(s(h_1 \circ h_2) + \nu_{h_1 \circ h_2} \big(\nu^{-1}_{h_1 \circ h_2}(\tau(h_1, h_2)) + \sigma_{h_2}(\nu^{-1}_{h_1}(y_1)) + \nu^{-1}_{h_2}(y_2)\big)\big)\\
&=& \big(h_1 \circ h_2, \tau(h_1, h_2) + \nu_{h_1 \circ h_2}(\sigma_{h_2}(\nu^{-1}_{h_1}(y_1))) + \nu_{h_1}(y_2)\big)\\
&=& (h_1, y_1)\circ(h_2, y_2)\\
&=& \phi(s(h_1) + y_1) \circ \phi(s(h_2) + y_2).
\end{eqnarray*}
Hence $\phi$ is a brace homomorphism. Also $\phi(y) = (0, y) = i'( y)$ and 
$$\pi(s(h) + y)= h = \pi'(h, y) = \pi^\prime(\phi(s(h) + y)),$$
which establishes the  commutativity of the preceding diagram. 

Finally, we proceed to show the injectivity of  $\psi$. Let $(\beta_1, \tau_1), (\beta_2, \tau_2) \in \Z_N^2(H, I)$ such that $\psi([(\beta_1, \tau_1)]) = \psi([(\beta_2, \tau_2)])$. More precisely, we have the following commutative diagram:
$$\begin{CD}
0 @>>> I @>>>  (H, I, \nu, \mu, \sigma, \beta_1, \tau_1)@>{{\pi} }>> H @>>> 0\\ 
&& @V{\text{Id}} VV@V{\phi} VV @V{\text{Id} }VV \\
0 @>>>I @>>>  (H, I, \nu, \mu, \sigma, \beta_2, \tau_2) @>{{\pi^\prime} }>> H @>>> 0,
\end{CD}$$
where $\phi$ is a brace homomorphism.  Let $s$ and $s'$ be the st-sections of $\pi$ and $\pi'$ given by $s(h) = (h, 0)$ and $s'(h) = (h, 0)$, respectively, for all $h \in H$. It now follows from the commutativity of the diagram that, for each $h \in H$, $\phi(h, 0) = (h, y_h)$ for some $y_h \in I$. Notice that for a given $h \in H$, $y_h$ is unique. Define $\theta : H \to I$ by $\theta(h) = y_h$. Since $s(0) = (0,0)$, it follows that $\theta \in C_N^1$. Comparing second components in the final computations of $\phi((h_1, y_1) + (h_2, y_2)) = \phi(h_1, y_1) + \phi(h_2, y_2)$ and  $\phi((h_1, y_1) \circ (h_2, y_2)) = \phi(h_1, y_1) \circ \phi(h_2, y_2)$, we, respectively, get
$$ (\beta_1-\beta_2)(h_1, h_2)=  \theta(h_2) - \theta(h_1 + h_2) + \mu_{h_2}(\theta(h_1))$$
 and
$$(\tau_1-\tau_2)(h_1, h_2)=  \nu_{h_1}(\theta(h_2)) - \theta(h_1 \circ h_2) + \nu_{h_1 \circ h_2}(\sigma_{h_2}(\nu_{h_1}^{-1}(\theta(h_1))))$$
for all $h_1, h_2 \in H$. 
This simply means that $(\beta_1, \tau_1)$ and $(\beta_2, \tau_2)$ are cohomologous, and the proof is complete.
\hfill $\Box$

\end{proof}

Let $H$ be a left skew brace and $I$  a trivial brace such that $H$ acts trivially on $I$, that is,  all $\nu_h$, $\mu_h$ and $\sigma_h$ are trivial automorphism of $(I, +)$ for all $h \in H$. It is immediate, in this case, that $(\nu, \mu, \sigma)$ is a good triplet of actions. Let $[(\beta, \tau)] \in \Ho_N^2(H, I)$ and $E:=(H,I, \nu, \mu, \sigma, \tau,\beta)$ be the corresponding extension of $H$ by $I$. It turns out that the image of $I$ in $E$ is a central ideal of $E$. On the other hand, if 
$$\mathcal{E} : 0 \to I \longrightarrow E \stackrel{\pi}{\longrightarrow} H \to 0$$
 is a central extension, then   $\mathcal{E} \in \Ext_{(\nu, \mu, \sigma)}(H, I)$, where $(\nu, \mu, \sigma)$ is a trivial action of $H$ on $I$.  Let the set of all equivalence classes of central extension of $H$ by $I$ be denoted by $\CExt(H, I)$.  Then, as a special case of Theorem \ref{gbij-thm sb}, we get the following brace theoretic analog of  \cite[Theorem 5.8]{LV16}.
 
 \begin{cor}
Let $H := (H, +, \circ)$ be a left  brace which acts trivially on a trivial brace $I := (I, +)$.  Then there is a bijection between $\Ho^2_N(H, I)$ and $\CExt(H, I)$.
\end{cor}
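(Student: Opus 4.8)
The plan is to derive the corollary directly from Theorem \ref{gbij-thm} by showing that, when $(\nu,\sigma)$ is the trivial pair of actions, the set $\Ext_{\nu,\sigma}(H,I)$ coincides with $\CExt(H,I)$. Since a bijection $\Ho^2_N(H,I) \leftrightarrow \Ext_{\nu,\sigma}(H,I)$ is already available, establishing this set-equality immediately yields the desired bijection $\Ho^2_N(H,I) \leftrightarrow \CExt(H,I)$.

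First I would check that the trivial pair of actions is indeed a good pair: with $\nu_h = \sigma_h = \Id$ for all $h$, the good-pair relation $\nu_{h_1+h_2}(\sigma_{h_1+h_2}(y)) + y = \nu_{h_1}(\sigma_{h_1}(y)) + \nu_{h_2}(\sigma_{h_2}(y))$ reduces to $y + y = y + y$, which holds. Next, given $[(\beta,\tau)] \in \Ho^2_N(H,I)$, consider the corresponding extension $E := (H, I, \nu, \sigma, \beta, \tau)$ produced by Theorem \ref{ext-thm}. For $(h,y) \in E$ and $(0,z) \in i(I)$, one computes using the operations $(2)$ and $(3)$ with $\nu,\sigma$ trivial that $(0,z) \circ (h,y) = (h, z + y + \tau(0,h)) = (h, z+y)$ by Lemma \ref{ext-lemma2}(iii), and similarly $(h,y)\circ(0,z) = (h, y + z)$ and $(h,y) + (0,z) = (h, y+z+\beta(h,0)) = (h,y+z)$ by Lemma \ref{ext-lemma2}(ii). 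Hence $(0,z)\circ(h,y) = (h,y)\circ(0,z) = (h,y) + (0,z)$, which is exactly the centrality condition for the ideal $i(I)$; so $[\mathcal{E}] \in \CExt(H,I)$. This shows $\psi$ lands in $\CExt(H,I)$.

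Conversely, suppose $\mathcal{E}: 0 \to I \to E \xrightarrow{\pi} H \to 0$ is a central extension, with $I$ viewed as a central ideal of $E$. I would show that the pair of actions $(\nu,\sigma)$ associated to $\mathcal{E}$ via \eqref{action1} and \eqref{action2} is trivial. For any st-section $s$ and any $y \in I$, centrality gives $s(h) \circ y = s(h) + y$, so $\nu_h(y) = s(h)\circ y - s(h) = y$; and $s(h)^{-1} \circ y \circ s(h) = s(h)^{-1} \circ s(h) \circ y = y$ (using $y \circ s(h) = s(h) \circ y$ from centrality), so $\sigma_h(y) = y$. Thus the corresponding pair of actions is the trivial one, i.e. $[\mathcal{E}] \in \Ext_{\nu,\sigma}(H,I)$ for the trivial $(\nu,\sigma)$. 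Combining the two inclusions, $\Ext_{\nu,\sigma}(H,I) = \CExt(H,I)$ as subsets of $\Ext(H,I)$, and Theorem \ref{gbij-thm} supplies the bijection with $\Ho^2_N(H,I)$.

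The calculations here are all short and routine given the machinery already set up; the only point requiring a little care is the bookkeeping with the operations $(2)$ and $(3)$ and the vanishing facts of Lemma \ref{ext-lemma2}, to confirm that \emph{all three} relevant products (the two $\circ$-products and the $+$) agree, so that $i(I)$ is genuinely a central ideal in the sense defined in Section 2 (rather than merely, say, central in $(E,\circ)$). I do not anticipate a serious obstacle; the substance of the result is entirely inherited from Theorem \ref{gbij-thm}, and this corollary is a specialization recording the classical central-extension case, the brace analog of \cite[Theorem 5.8]{LV16}.
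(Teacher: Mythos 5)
Your proposal is correct and follows essentially the same route as the paper: the paper also observes that the trivial pair is good, that the standard extension $(H,I,\nu,\sigma,\beta,\tau)$ has $i(I)$ central when the actions are trivial, and that a central extension forces the associated $(\nu,\sigma)$ to be trivial, so that $\CExt(H,I)=\Ext_{\nu,\sigma}(H,I)$ and Theorem \ref{gbij-thm} applies. You merely make explicit the short computations (via Lemma \ref{ext-lemma2} and \eqref{action1}--\eqref{action2}) that the paper leaves as assertions.
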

  
We conclude this section with explicit computation of $\Ho_N^2(H,I)$, where $H = \mathbb{Z}/2\mathbb{Z}$ and $I=\mathbb{Z}/2\mathbb{Z} \times \mathbb{Z}/2\mathbb{Z}$ with two good triplets of actions; one non-trivial and other trivial. It is easy to see that the triplet $(\nu, \mu, \sigma)$ with $\nu_{h}(a,b)=(a+b+h, b)$, $\mu_h(a, b) = (a, b)$ and $\sigma_h(a,b)=(a,b+a+h)$  is a good triplet of actions, where $h \in H$ and $a, b \in I$. This triplet of actions was considered by Bachiller \cite{DB18}. Let $(\beta, \tau)$  be a  $2$-cocycle. Notice that both $\beta$ as well as $\tau$ are determined by their values at $(1,1) \in H \times H$. An easy computation then reveals that the following are the only choices for $\big(\beta(1,1), \tau(1,1)\big)$: 
$$\{\big((0,0), (0,0)\big), \big((0,0), (0,1)\big), \big((1,0), (0,0)\big)\big((1,0), (0,1)\big)\}.$$
For clarity, let us take these cocycles as $(\beta_1, \tau_1), (\beta_1, \tau_2), (\beta_2, \tau_1), (\beta_2, \tau_2)$ respectively. If we define $\theta(0)=(0,0)$ and $\theta(1)=(0,1)$, then $\theta \in C_N^1$; further $(\beta_1, \tau_2)=\partial^1(\theta)$, that is, $(\beta_1, \tau_2)$ is a $2$-coboundary. We claim that none of $(\beta_2, \tau_1)$ and $(\beta_2, \tau_2)$ is a $2$-coboundary, but these are cohomologous. Contrarily, assume that $(\beta_2,\tau_2)$ is a $2$-coboundary. Then there exists $\theta : C_N^1$ such that 
 $\partial^1(\theta)=(\beta_2,\tau_2)$.  So $\theta(0)=(0,0)$ and  let $\theta(1)=(y_1, y_2)$. Now 
\begin{eqnarray*}
\beta_2(1,1) &=&  \theta(1)-\theta(0)+ \theta(1)\\
& =&  (y_1, y_2) + (y_1, y_2)\\
& =& (0, 0),
\end{eqnarray*}
which is not possible, since $\beta_2(1,1) = (1, 0)$. Hence $(\beta_2,\tau_2)$ is not a $2$-couboundary. The same computation also shows that $(\beta_2, \tau_1)$ too is not a $2$-coboundary.
Finally,  notice that $(\beta_2, \tau_2)-(\beta_2, \tau_1) = (\beta_1, \tau_2)$, which we have already shown to be a  $2$-coboundary. Hence $(\beta_2, \tau_2)$ and $(\beta_2, \tau_1)$ are cohomologous, and therefore   
$$\Ho^2_N(H,I) = \{[(\beta_1, \tau_1)], [(\beta_2,\tau_2)]\} \cong \mathbb{Z}/2\mathbb{Z}.$$ 

Now we consider the trivial action, that is, $\nu_h = \mu_h = \sigma_h = \Id$ for all $h \in H$ . As above, for computing $2$-cocycles $(\beta, \tau)$ of $H$, it is enough to know their values on $(1,1)$. A detailed computation shows that $(\beta_i, \tau_j)$, $1 \le i, j \le 4$, are all possible $2$-cocycles of $H$, where $\beta_1 = (0, 0)= \tau_1$, $\beta_2 = (0, 1)= \tau_2$, $\beta_3 = (1, 0)= \tau_3$  and $\beta_4 = (1, 1)= \tau_4$.  Let $\theta \in C^1_N$. Notice that $\theta(0) = 0$. Then  $\partial^1(\theta) = (g, f)$, where
$$g(h_1, h_2) = f(h_1, h_2) = \theta(h_2) - \theta(h_1 + h_2) + \theta(h_1) = 0$$
for all $h_1, h_2 \in H$, since $H$ is a trivial brace and the action is trivial. Hence $\B^2_N(H, I)$ is the trivial group, and therefore $\Ho^2_N(H,I)$ is isomorphic to the elementary abelian group of order $16$. 


\section{Cohomology of Braces : A special case}

In this section we generalize the cohomological results of \cite[Sections 2, 3]{LV16} for skew braces.
Let $(H, +, \circ)$ be a left skew brace and $(I, +)$  an abelian group.  Again let   $\nu : (H, \circ) \rightarrow \Aut(I,  +)$  be a left group action and  $\mu: (H, +) \rightarrow \Aut(I,+)$ and $ \sigma : (H, \circ) \rightarrow \Aut(I,+)$ be right group actions. Please note that throughout this section, we'll consider just a triplet of actions $(\nu, \mu, \sigma)$. We'll not need a good triplet except in the situation when the actions are all  trivial, in which case it automatically becomes a good triplet.

Set   $C^n(H;I) := \Fun(H^n, I)$ for $n \ge 1$, where $H^n$ denotes the cartesian product of $n$ copies of $H$. Notice that $C^n(H;I)$ is an abelian group for each $n$. For $n \ge 0$, define the maps $\partial^n : C^n \rightarrow C^{n+1}$ by

\begin{eqnarray*}
(\partial^{n} f)(h_1, \ldots, h_{n+1}) &=&  \nu_{h_1}(f(h_2,\ldots,h_{n+1}))\\
&  & +\;\sum_{i=1}^{n} (-1)^{i}  f(h_1, \ldots, h_i \circ h_{i+1} , \ldots ,  h_{n+1}) \\ 
& & + \; (-1)^{n+1} \ \nu_{h_1\circ h_2 \circ  \cdots  \circ h_{n+1}} (\sigma_{h_{n+1}}(\nu^{-1} _{h_1 \circ h_2 \circ \cdots \circ h_n}(f(h_1, \ldots,h_n)))).
\end{eqnarray*}

Define 
$$RC^n(H; I)=\{f \in Fun(H^n, I) \mid f \;\mbox{is linear in the {\it n}th co-ordinate}\}.$$
Then the restriction of $\partial^{n}$, which we still denote by $\partial^{n}$,  gives the map  
$$\partial^n : RC^n(H; I) \rightarrow RC^{n+1}(H; I)$$
for each $n \ge 1$. Further, let $RC_N^n(H; I)$ denote the set of all $f \in RC^n(H; I)$ which vanish on all degenerate tuples. This becomes a subgroup of $RC^n(H; I)$ and the restriction of $\partial^n$  gives a map from $RC^n_N(H; I) \to RC^{n+1}_N(H; I)$.

\begin{thm}
For $n \ge 1$, $(C^n(H;I) , \partial^{n})$, $(RC^n(H; I), \partial^n)$ and $(RC_N^n(H; I), \partial^n)$ are cochain complexes.
\end{thm}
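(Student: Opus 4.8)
The plan is to show that $\partial^{n+1}\circ\partial^n=0$ for the three complexes simultaneously, since the latter two are obtained by restricting $\partial^n$ to subgroups that are preserved by $\partial^n$ (the preservation statements are already recorded in the paragraphs just above the theorem). So the whole content is the identity $\partial^{n+1}\partial^n f=0$ for an arbitrary $f\in C^n(H;I)$, and everything reduces to a bookkeeping computation with the explicit formula for $\partial^n$. First I would fix notation: write $\lambda_{h}$ for $\nu_h$ and keep track of the three kinds of terms appearing in $(\partial^n f)(h_1,\dots,h_{n+1})$ — the \emph{head term} $\nu_{h_1}(f(h_2,\dots,h_{n+1}))$, the \emph{interior terms} $(-1)^i f(h_1,\dots,h_i\circ h_{i+1},\dots,h_{n+1})$ for $1\le i\le n$, and the \emph{tail term} with sign $(-1)^{n+1}$ carrying the $\nu_{h_1\circ\cdots\circ h_{n+1}}\sigma_{h_{n+1}}\nu^{-1}_{h_1\circ\cdots\circ h_n}$ prefactor.

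Then I would expand $(\partial^{n+1}(\partial^n f))(h_1,\dots,h_{n+2})$ by substituting the formula for $\partial^{n+1}$ and, inside each resulting summand, the formula for $\partial^n f$. This produces a large but classifiable family of terms; I would group them exactly as one does in the proof that the Hochschild (or group) cobar differential squares to zero, namely by which pair of indices got multiplied. The interior-interior cancellations are the classical simplicial ones: the term where one first composes at position $i$ and then at position $j$ (with $j>i$) cancels against the term where one composes at $j$ first and then at $i$, the two occurrences carrying opposite signs because of the shift in index; associativity of $\circ$ is what makes the "adjacent" cases ($j=i+1$) work out. The genuinely brace-specific work is in the interaction of the head term and the tail term with the interior terms: here one needs the homomorphism property $\nu_{a\circ b}=\nu_a\nu_b$, the anti-homomorphism property $\sigma_{a\circ b}=\sigma_b\sigma_a$, and — crucially — the good-pair relation is \emph{not} needed here because we are in the $RC^n$/linear-in-last-coordinate setting of Section 4 rather than the $C_N$ setting of Section 3; instead one only uses these (anti)homomorphism identities to see that the $\nu$–$\sigma$ prefactors telescope. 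I would present the head–tail cancellation in detail (it is the one case worth writing out) and remark that the remaining cases are analogous, mirroring the demonstration already given for $\partial^2\circ\partial^1=0$ in Lemma~\ref{ext-lemma1}.

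Finally I would note the compatibility with restriction: if $f$ is linear in its $n$th coordinate, then each term of $\partial^n f$ is linear in its $(n+1)$st coordinate — the head term because $\nu_{h_1}$ is additive, the interior terms because composition does not touch the last slot except in the term $i=n$ where one uses the brace compatibility \eqref{bcomp} together with linearity of $f$, and the tail term because $f(h_1,\dots,h_n)$ does not involve $h_{n+1}$ at all and $\sigma_{h_{n+1}},\nu_{h_1\circ\cdots\circ h_{n+1}}$ are automorphisms; this is the content of the sentence preceding the theorem and may simply be invoked. Likewise $\partial^n$ sends functions vanishing on degenerate tuples to functions vanishing on degenerate tuples, because setting some $h_k=0$ either kills a factor directly or collapses an interior composition $h_i\circ h_{i+1}$ to the neighbouring argument, forcing pairwise cancellation of the surviving terms (again as in the classical normalized bar complex). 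Hence $\partial^{n+1}\partial^n=0$ on all three nested groups, so each is a cochain complex.

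\medskip

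The main obstacle I anticipate is purely organizational rather than conceptual: keeping the signs straight when an interior composition at position $i$ in the \emph{outer} differential changes whether a given interior composition of the \emph{inner} differential sits at an index that has been shifted, and making sure the $\nu$/$\sigma$ conjugating factors attached to the head and tail terms match up across the two halves of each cancelling pair. I would handle this by writing the $(n+2)$-variable expression as a sum indexed by ordered pairs and checking the cancellation of each pair-type in turn, rather than trying to manipulate the whole sum at once.
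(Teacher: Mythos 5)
Your proposal is correct and follows essentially the same route as the paper: the paper writes $\partial^n=\sum_{i=0}^{n+1}(-1)^i\partial_{n,i}$ and verifies the cosimplicial-type identities $\partial_{n,i}\,\partial_{n-1,j}=\partial_{n,j+1}\,\partial_{n-1,i}$ (checking one representative head--interior case and asserting the rest are analogous), which is exactly your pairwise cancellation organized by coface maps, and like you it uses only the (anti)homomorphism properties of $\nu$ and $\sigma$ rather than the good-pair relation. The paper also disposes of the $RC^n$ and $RC^n_N$ cases by simply invoking that $\partial^n$ preserves these subgroups, just as you propose.
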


\begin{proof}
We shall only indicate a proof of the fact that  $(C^n(H;I) , \partial^{n})$ is a cochain complex. Other assertion can be easily  verified from this.  We are required to show that $  \partial^{n+1} \partial^{n} = 0$.  We can write 
$$ \partial^n= \sum_{i=0}^{n+1}(-1)^{i}\partial_{n,i},$$
where 
\begin{eqnarray*}
(\partial_{n, 0}f)(h_1, \ldots,h_{n+1}) &=& \nu_{h_1}(f(h_2,\ldots,h_{n+1})),\\
(\partial_{n,i}f)(h_1, \ldots,h_{n+1}) &=& f(h_1, \ldots,h_i \circ h_{i+1}, \ldots, h_{n+1}) ,  \hspace{.2cm} 1 \leq i \leq n, \\
(\partial_{n,n+1}f)(h_1, \ldots,h_{n+1}) &=& \nu_{h_1\circ h_2 \circ  \cdots  \circ h_{n+1}} (\sigma_{h_{n+1}}(\nu^{-1} _{h_1 \circ h_2 \circ \cdots\circ h_n}(f(h_1, \ldots,h_n)))).
\end{eqnarray*} 

By a direct computation one can easily show that $ \partial_{n,i}  \partial_{n-1,j}=\partial_{n, j+1} \partial_{n-1,i}$ for $0 \le i, j \le n$, which implies $\partial^n  \partial^{n-1} =0$. To demonstrate, we compute in one case. Let $i=0$ and $j \ge 1$ be a fixed integer in the appropriate range. Then
\begin{eqnarray*}
\partial_{n,0}(\partial_{n-1,j}f)(h_1 \ldots h_{n+1}) & = &\nu_{h_1}(\partial_{n-1,j}f)(h_2, \ldots, h_{n+1})\\ 
 &=& \nu_{h_1}(f(h_2, \ldots, h_{j+1} \circ h_{j+2}, \ldots, h_{n+1})).
\end{eqnarray*}
On the other hand
\begin{eqnarray*}
\partial_{n, j+1}(\partial_{n-1,0}f)(h_1, \ldots h_{n+1}) & = & (\partial_{n-1,0}f)(h_1, \ldots, h_{j+1} \circ h_{j+2}, \ldots, h_{n+1})\\
& = &\nu_{h_1}(f(h_2, \ldots, h_{j+1} \circ h_{j+2}, \ldots, h_{n+1})).
\end{eqnarray*}
Hence $\partial_{n,0}\partial_{n-1,j} = \partial_{n, j+1}\partial_{n-1,0}$.  $\hfill$ $\Box$

\end{proof}

Let $\Ho^n(H,I)$,  $\RH^n(H,I)$ and $\RH^n_N(H,I)$, respectively, denote the $n$th cohomology groups of the complexes $(C^n(H;I) , \partial^{n})$, $(RC^n(H; I), \partial^n)$ and $(RC^n_N(H; I), \partial^n)$.  

Let $\STExt_{(\nu, \mu, \sigma)}(H,I)$ be the set of equivalence classes of all extensions of $H$ by $I$ with fixed good triplet of actions $(\nu, \mu, \sigma)$, and  which split as extensions of additive groups. Let $(\beta, \tau) \in \Ho_N^2(H,I)$. If we take $\beta = 0$, then it follows that $\tau \in \RH^2_N(H, I)$. More precisely, $\RH^2_N(H, I) = \{\tau : H \times H \to I  \mid (0, \tau) \in \Ho_N^2(H,I)\}$.  The next result now follows from Theorem \ref{gbij-thm sb} by taking $\beta = 0$.

\begin{thm}
There exists  a one-to-one correspondence between $\RH^2_N(H, I)$ and $\STExt_{(\nu, \mu, \sigma)}(H,I)$.
\end{thm}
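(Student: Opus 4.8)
The plan is to deduce this from Theorem \ref{gbij-thm} by specializing the bijection $\psi : \Ho^2_N(H,I) \to \Ext_{\nu,\sigma}(H,I)$ to the sub-collection of cocycles whose additive component $\beta$ vanishes. First I would observe that, by definition, $\RH^2_N(H,I)$ is (canonically identified with) the set of those classes in $\Ho^2_N(H,I)$ that contain a representative of the form $(0,\tau)$; one must check this is well-defined, i.e. that if $(0,\tau)$ and $(0,\tau')$ are cohomologous as elements of $\Z^2_N(H,I)$ then $\tau$ and $\tau'$ are cohomologous in the complex $(RC^\bullet_N(H;I),\partial^\bullet)$. This follows by inspecting the coboundary formulas: if $(0,\tau)-(0,\tau') = \partial^1(\theta)$ with $\theta \in C^1_N$, then the $\beta$-component of $\partial^1(\theta)$, namely $h_1,h_2 \mapsto \theta(h_2)-\theta(h_1+h_2)+\theta(h_1)$, must vanish, which forces $\theta$ to be additive, hence $\theta \in RC^1_N(H;I)$, and then the $\tau$-component of $\partial^1(\theta)$ is exactly $\partial^1\theta$ in the complex $RC^\bullet_N$. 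Conversely an additive $\theta$ gives a coboundary of the right shape. So the inclusion $\{(0,\tau) : \tau\in\Z^2\} \hookrightarrow \Z^2_N(H,I)$ descends to a well-defined injection $\RH^2_N(H,I) \hookrightarrow \Ho^2_N(H,I)$.

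Next I would identify, on the extension side, the image of this subset under $\psi$. An extension lies in $\STExt_{\nu,\sigma}(H,I)$ precisely when it splits as an extension of abelian groups; for the model extension $(H,I,\nu,\sigma,\beta,\tau)$ with addition $(h_1,y_1)+(h_2,y_2)=(h_1+h_2,y_1+y_2+\beta(h_1,h_2))$, an additive splitting exists iff $\beta$ is an additive coboundary, i.e. $\beta(h_1,h_2)=\theta(h_2)-\theta(h_1+h_2)+\theta(h_1)$ for some (necessarily additive, on $C^1_N$) $\theta$; replacing the cocycle $(\beta,\tau)$ by the cohomologous one $(\beta,\tau)-\partial^1(\theta)=(0,\tau')$ we see that the class $[(\beta,\tau)]$ lies in $\RH^2_N(H,I)\subseteq \Ho^2_N(H,I)$. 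Thus $\psi$ restricts to a bijection between $\RH^2_N(H,I)$ and $\STExt_{\nu,\sigma}(H,I)$: surjectivity is the argument just given (every abelian-split extension is equivalent to a model extension with $\beta=0$), and injectivity is inherited from the injectivity of $\psi$ in Theorem \ref{gbij-thm} together with the well-definedness established in the first step.

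Concretely, the argument is: (i) show $\psi$ maps $\RH^2_N(H,I)$ into $\STExt_{\nu,\sigma}(H,I)$ — the model extension built from $(0,\tau)$ visibly has additive group $(H,+)\times(I,+)$, so it splits additively; (ii) show the image is all of $\STExt_{\nu,\sigma}(H,I)$ — given an abelian-split extension, choose an st-section $s$ that is simultaneously an additive splitting (this is where splitness is used), so the associated $\beta$ of \eqref{cocycle1} is identically $0$, whence by Lemma \ref{well-def-cocycle} and Theorem \ref{gbij-thm} it is equivalent to $(H,I,\nu,\sigma,0,\tau)$; (iii) injectivity is automatic from Theorem \ref{gbij-thm}.

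The main obstacle, and the only real content beyond bookkeeping, is step (i) of the well-definedness, i.e. verifying that the passage from the pair-complex $(C^\bullet_N,\partial^\bullet)$ of Section 3 to the single complex $(RC^\bullet_N(H;I),\partial^\bullet)$ of Section 4 is compatible — that the $\tau$-part of $\partial^1$ restricted to additive $\theta$ agrees on the nose with the Section 4 differential $\partial^1 : RC^1_N \to RC^2_N$, and likewise that the $\tau$-part of the cocycle condition $\partial^2(0,\tau)=0$ reduces to $\partial^2\tau = 0$ in the Section 4 sense. This is a direct comparison of the two explicit formulas once one notes that for additive $\theta$ one has $\nu_{h_1}(\theta(h_2)) - \theta(h_1\circ h_2) + \nu_{h_1\circ h_2}(\sigma_{h_2}(\nu_{h_1}^{-1}(\theta(h_1))))$ matching $(\partial^1\theta)(h_1,h_2)$ from the Section 4 displayed formula with $n=1$; no surprises are expected, but it is the step that actually ties the two cohomology theories together, and should be carried out carefully.
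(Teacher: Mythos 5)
Your proposal is correct and follows essentially the same route as the paper, which simply remarks that the result ``follows from Theorem \ref{gbij-thm} by taking $\beta = 0$'' after identifying $\RH^2_N(H,I)$ with the classes $[(0,\tau)]$ in $\Ho^2_N(H,I)$. You have merely filled in the details the paper leaves implicit (well-definedness of that identification via additivity of $\theta$, and the characterization of abelian-split extensions by the vanishing of $\beta$ for an additive st-section), all of which check out against the displayed formulas.
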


Let $(\nu, \mu, \sigma)$ be a good triplet of actions of $H$ on $I$.
Let $\Ho_N^2((H, \circ), I)$ denote the second cohomology group of $(H, \circ)$ with coefficients in the abelian group $I$, where  $(H, \circ)$ acts on $I$ through $\sigma$. For a $2$-cocycle $f$ from $RC^n_N(H; I)$ we can define a map $\bar{f} :  (H, \circ) \times (H, \circ) \to I$ by setting
\begin{equation}\label{eqn-sec4}
\bar{f}(h_1, h_2) = \nu^{-1}_{h_1 \circ h_2}(f(h_1, h_2))
\end{equation}
for all $h_1, h_2 \in (H, \circ)$. It turns out that $\bar{f}$ is  a $2$-cocycle of $(H, \circ)$ with coefficients in $I$. This gives rise to the following

\begin{prop}\label{prop-sec4}
The cohomology group $\RH^2_N(H, I)$ of a skew brace embeds in the cohomology group $\Ho_N^2((H, \circ), I)$ of a group under the association \eqref{eqn-sec4}.
\end{prop}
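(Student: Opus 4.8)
The plan is to exhibit an explicit injective group homomorphism $\RH^2_N(H,I)\to\Ho^2_N((H,\circ),I)$ using the association \eqref{eqn-sec4}. First I would check that $f\mapsto f'$ is well defined on cochains: given $f\in RC^2_N(H;I)$, set $f'(h_1,h_2)=\nu^{-1}_{h_1\circ h_2}(f(h_1,h_2))$; since $f$ vanishes on degenerate tuples and $\nu_0=\mathrm{Id}$, we get $f'(h_1,0)=f'(0,h_2)=0$, so $f'$ is a normalized $2$-cochain of the group $(H,\circ)$. The map is clearly additive, hence a homomorphism of the underlying abelian groups of cochains. The next step is to verify that if $\partial^2 f=0$ (in the brace complex of Section~4, which for the second component of a $2$-cocycle with $\beta=0$ is exactly $\partial^{1,1}_h(f)=0$, i.e.\ the identity displayed just before Lemma~\ref{ext-lemma1}), then $f'$ is a $2$-cocycle of $(H,\circ)$ with the action $\sigma$. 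Concretely, the group $2$-cocycle condition for $f'$ with respect to the $\sigma$-action reads
$$\sigma_{h_1}\text{-twisted}:\quad f'(h_2,h_3)-f'(h_1\circ h_2,h_3)+f'(h_1,h_2\circ h_3)-f'(h_1,h_2)=0$$
in the appropriate twisted sense; substituting $f'(h_1,h_2)=\nu^{-1}_{h_1\circ h_2}f(h_1,h_2)$ and applying $\nu_{h_1\circ h_2\circ h_3}$ throughout, the brace cocycle identity
$$\nu_{h_1}(f(h_2,h_3))-f(h_1\circ h_2,h_3)+f(h_1,h_2\circ h_3)-\nu_{h_1\circ h_2\circ h_3}(\sigma_{h_3}(\nu^{-1}_{h_1\circ h_2}f(h_1,h_2)))=0$$
turns into precisely the group cocycle identity for $f'$ after one uses $\nu$ being a homomorphism $(H,\circ)\to\Aut(I,+)$. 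So $f\mapsto f'$ carries $\Z^2$ into $\Z^2$.

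The third step is to handle coboundaries. I would show that if $f=\partial^1\theta$ for some $\theta\in C^1_N$ (equivalently, $f(h_1,h_2)=\nu_{h_1}(\theta(h_2))-\theta(h_1\circ h_2)+\nu_{h_1\circ h_2}(\sigma_{h_2}(\nu^{-1}_{h_1}(\theta(h_1))))$, which is the $\tau$-component of $\partial^1\theta$ with $\theta$ also satisfying $\theta(h_2)-\theta(h_1+h_2)+\theta(h_1)=0$ so that $\beta=0$), then $f'=\nu^{-1}_{h_1\circ h_2}f(h_1,h_2)$ is a group $2$-coboundary for the $\sigma$-action: setting $\theta'(h)=\nu^{-1}_h(\theta(h))$, a short computation applying $\nu^{-1}_{h_1\circ h_2}$ to the displayed formula for $f$ gives $f'(h_1,h_2)=\sigma_{h_2}(\theta'(h_1))-\theta'(h_1\circ h_2)+\theta'(h_2)$ (up to the sign/left-right convention used for $\sigma$), the standard form of a normalized group coboundary. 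Hence $f\mapsto f'$ descends to a well-defined homomorphism on cohomology $\bar{(\;\cdot\;)}:\RH^2_N(H,I)\to\Ho^2_N((H,\circ),I)$.

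Finally, for injectivity I would show the kernel is trivial: suppose $f\in\Z^2$ with $f'=\partial\theta'$ a group coboundary, i.e.\ $f'(h_1,h_2)=\sigma_{h_2}(\theta'(h_1))-\theta'(h_1\circ h_2)+\theta'(h_2)$ for some normalized $\theta':H\to I$. Put $\theta(h):=\nu_h(\theta'(h))$; then $\theta\in C^1_N$, and I must check two things: that $(0,f)-\partial^1\theta$ has trivial $\tau$-component (immediate by reversing the computation of the previous paragraph) and that its $\beta$-component is also zero, i.e.\ $\theta(h_2)-\theta(h_1+h_2)+\theta(h_1)=0$ — but this is forced because $f$ came from $RC^2_N(H;I)$ with $\beta=0$, and the description of $\RH^2_N(H,I)$ as $\{\tau:(0,\tau)\in\Ho^2_N(H,I)\}$ means we are free to choose a representative with $\beta=0$; one then verifies $\theta$ itself can be taken additive in the $+$-sense, or argues directly in $\RH^2_N$. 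Consequently $f$ is a coboundary in $RC^2_N$, so the class of $f$ is zero, proving injectivity. The main obstacle I anticipate is bookkeeping the left/right conventions: the brace complex twists by $\nu$ on the left and $\sigma$ on the right (with $\sigma$ an \emph{anti}-homomorphism), whereas the classical group cohomology $\Ho^2_N((H,\circ),I)$ is set up with the single $\sigma$-action, so one must be careful that conjugating the brace cocycle identity by the appropriate power of $\nu$ really does strip off all $\nu$-dependence and leave exactly the $\sigma$-twisted group cocycle identity — in particular that the good-pair condition is not secretly needed here (it should not be, since $\beta=0$ kills the additive interaction), and that $\nu$ being a genuine homomorphism $(H,\circ)\to\Aut(I,+)$ is what makes the telescoping work.
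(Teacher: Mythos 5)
The first three steps of your plan are correct, and they are essentially all that the paper itself records: Proposition \ref{prop-sec4} is stated without proof beyond the assertion that $f'$ is a group $2$-cocycle. Indeed, applying $\nu^{-1}_{h_1\circ h_2\circ h_3}$ to the identity $\partial^{1,1}_h(f)=0$ and using that $\nu$ is a homomorphism of $(H,\circ)$ converts the brace cocycle condition into the $\sigma$-twisted group cocycle identity $\sigma_{h_3}(f'(h_1,h_2))-f'(h_1\circ h_2,h_3)+f'(h_1,h_2\circ h_3)-f'(h_2,h_3)=0$, and the substitution $\theta'(h)=\nu^{-1}_h(\theta(h))$ carries the $\tau$-component of $\partial^1\theta$ to the group coboundary of $\theta'$. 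So the association \eqref{eqn-sec4} does induce a well-defined homomorphism $\RH^2_N(H,I)\to \Ho^2_N((H,\circ),I)$.

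The genuine gap is the injectivity step, precisely at the point you flag and then wave away. If $f'$ is a group coboundary of a normalized $\theta'$, the cochain $\theta(h)=\nu_h(\theta'(h))$ does satisfy $f=\partial^1\theta$ formally, but a coboundary in the complex $(RC^n_N,\partial^n)$ must come from an \emph{additive} $\theta\in RC^1_N(H;I)$, and nothing forces the additive defect $\theta(h_1)+\theta(h_2)-\theta(h_1+h_2)$ to vanish or to be removable by a correction in $\Ker(\partial^1)$; the fact that $f$ is $+$-linear in its second argument constrains this defect but does not kill it. This step cannot be repaired, because the induced map need not be injective. Take $H=I=\mathbb{Z}/3\mathbb{Z}$ as trivial braces with the trivial (good) pair of actions and $f(a,b)=ab$. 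Then $f$ is biadditive, vanishes on degenerate tuples and satisfies $\partial^{1,1}_h(f)=0$, so $f$ is a cocycle in $RC^2_N(H;I)$; since every additive $\theta$ here has $\partial^1\theta=0$, the class of $f$ in $\RH^2_N(H,I)$ is nonzero. Yet $f'=f$ is the group coboundary of the normalized $1$-cochain $\mu(a)=a(1-a)/2 \bmod 3$, so its class in $\Ho^2_N((\mathbb{Z}/3\mathbb{Z},+),\mathbb{Z}/3\mathbb{Z})$ vanishes; equivalently, the brace with $(a,x)\circ(b,y)=(a+b,x+y+ab)$ is a nontrivial element of $\STExt_{\nu,\sigma}(H,I)$ whose underlying multiplicative group extension splits. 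Thus the association gives a homomorphism but not, in general, an embedding, and no argument along the lines you propose (or any other) can establish the statement without further hypotheses.
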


Let $H$ and $H'$ be  left skew braces, and $I$ and $I'$ be  $H$ and $H'$ modules with $(\nu, \mu, \sigma)$ and  $(\nu', \mu',  \sigma')$, respectively, as  actions.
Let $\alpha:H^\prime \rightarrow H$ and $\zeta: I \rightarrow I^\prime$ be brace homomorphisms.   The pair $(\alpha, \zeta)$ is said to be  \emph{compatible} with the pairs of actions $(\nu, \mu, \sigma)$ and $(\nu^\prime, \mu', \sigma^\prime)$ if the  diagram \eqref{comp-homo} commutes for all actions. Recall that, equivalently saying, $(\alpha, \zeta)$ is  compatible with the actions, if 
$$\zeta(\nu_{\alpha(h^\prime)}(y))=\nu^\prime_{h^\prime}(\zeta(y)),$$
$$\zeta(\mu_{\alpha(h^\prime)}(y))=\mu^\prime_{h^\prime}(\zeta(y))$$
 and 
 $$\zeta(\sigma_{\alpha(h^\prime)}(y))=\sigma^\prime_{h^\prime}(\zeta(y))$$
for all $h' \in H'$ and $y \in I$.
 
Let $(\alpha, \zeta)$ be compatible with the actions $(\nu, \mu, \sigma)$ and $(\nu^\prime, \mu', \sigma^\prime)$.  Let us fix an integer $n \ge 1$.  For $f \in C^n(H, I)$, define $f' : H'^n \to I'$ by
$$f'(h_1', \ldots, h_n') = \zeta\big(f\big(\alpha(h_1), \ldots, \alpha(h_n)\big)\big)$$
for $h_i' \in H'$. Since $(\alpha, \zeta)$ is compatible with the actions, it easily follows that $f' \in C^n(H', I')$.
Define $(\alpha, \zeta)^n : C^n(H, I) \to C^n(H', I')$ by
$$(\alpha, \zeta)^n(f) = f'$$
for all $f \in C^n(H, I)$. It is not difficult to see that $(\alpha, \zeta)^n$ is a homomorphism and  the following diagram commutes
\begin{equation*}
\begin{CD}
 C^n(H, I)   @>{(\alpha, \zeta)^n}>>  C^n(H', I') \\
  @VV{\partial^n}V  @VV{\partial'^n}V\\
 C^{n+1}(H, I)   @>{(\alpha, \zeta)^{n+1}}>>  C^{n+1}(H', I').
\end{CD}
\end{equation*}
Further straightforward computations now give the following
 
 \begin{thm}
For each $n \ge 1$,  the homomorphism $(\alpha, \zeta)^n : C^n(H, I) \to C^n(H', I')$, defined in the preceding para, induces a homomorphism from  $\Ho^n(H, I)$ to $\Ho^n(H', I')$,
 which further induces  homomorphisms from $\RH^n(H, I)$ to $\RH^n(H', I')$ and $\RH^n_N(H, I)$ to $\RH^n_N(H', I')$.
 \end{thm}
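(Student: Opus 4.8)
The plan is to deduce everything from the commutative square relating $(\alpha,\zeta)^n$ with the coboundary maps, which has already been established in the paragraph preceding the statement. Once we know that $(\alpha,\zeta)^\bullet$ is a chain map between the cochain complexes $(C^\bullet(H,I),\partial^\bullet)$ and $(C^\bullet(H',I'),\partial'^\bullet)$, the induced map on cohomology $\Ho^n(H,I)\to\Ho^n(H',I')$ is automatic: a chain map sends cocycles to cocycles and coboundaries to coboundaries, hence descends to the quotient. So the first step is simply to invoke that commuting square and record that $(\alpha,\zeta)^n$ carries $\Z^n$ into $\Z'^n$ and $\B^n$ into $\B'^n$, giving the homomorphism $\Ho^n(H,I)\to\Ho^n(H',I')$.

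The substance of the statement is that this induced map respects the two successive refinements: the linearity-in-the-last-coordinate condition defining $RC^n$, and the vanishing-on-degenerate-tuples condition defining $RC^n_N$. For the first, I would check that if $f\in RC^n(H,I)$ then $f'=(\alpha,\zeta)^n(f)\in RC^n(H',I')$: linearity of $f'$ in its $n$th argument follows because $\alpha$ is a brace homomorphism (so in particular additive), hence $\alpha$ of a sum in the last slot is the sum of the $\alpha$'s, and $\zeta$ is additive, so $f'$ is additive in the last slot precisely when $f$ is. For the second, if in addition $f$ vanishes on degenerate tuples, then since $\alpha(0)=0$ (a brace homomorphism sends the additive identity to the additive identity, which equals the $\circ$-identity), any degenerate tuple $(h_1',\dots,h_n')$ with some $h_i'=0$ maps under $\alpha$ to a degenerate tuple in $H^n$, on which $f$ vanishes; hence $f'$ vanishes there too. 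Thus $(\alpha,\zeta)^n$ restricts to maps $RC^n(H,I)\to RC^n(H',I')$ and $RC^n_N(H,I)\to RC^n_N(H',I')$, and these restrictions still commute with $\partial^n$ (the square restricts), so each is again a chain map between the corresponding subcomplexes. Applying the same cocycle/coboundary argument as before then yields the induced homomorphisms $\RH^n(H,I)\to\RH^n(H',I')$ and $\RH^n_N(H,I)\to\RH^n_N(H',I')$, and by construction these are compatible with the inclusions of complexes, so they fit into a commuting ladder with the map on $\Ho^n$.

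I do not anticipate a genuine obstacle here; the proof is a routine verification once the chain-map square is in hand, which is why the statement says "further straightforward computations now give the following." If there is any subtlety worth spelling out, it is the bookkeeping in confirming that $(\alpha,\zeta)^n$ genuinely lands in the smaller subgroup — i.e. that $\alpha$ being a brace homomorphism supplies exactly the two facts needed ($\alpha$ additive, $\alpha(0)=0$), and that the compatibility of $(\alpha,\zeta)$ with the actions $(\nu,\sigma)$, $(\nu',\sigma')$ is what makes $f'$ well-defined as an element of $C^n(H',I')$ in the first place (this was already used in the preceding paragraph). I would therefore present the argument compactly: state that $(\alpha,\zeta)^n$ preserves linearity in the last coordinate and vanishing on degenerate tuples, note that the commuting square of the previous paragraph restricts to these subcomplexes, and conclude that a chain map induces maps on all three levels of cohomology in a compatible way.
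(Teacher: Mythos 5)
Your proposal is correct and follows exactly the route the paper intends: the commuting square of the preceding paragraph makes $(\alpha,\zeta)^{\bullet}$ a chain map, and the only content left is the routine check that it preserves the subcomplexes $RC^n$ and $RC^n_N$ (via additivity of $\alpha$ and $\zeta$ and $\alpha(0)=0$), which is precisely the "further straightforward computations" the paper alludes to without writing out. No discrepancies to report.
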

 
 Let $K$ be a left ideal of $H$ and $\alpha : K \to H$ be the embedding. For a  triplet of actions $(\nu, \mu, \sigma)$ of $H$ on $I$, $(\nu', \mu', \sigma')$ is a triplet of actions of $K$ on $I$, where $\nu' = \nu|_K$,  $\mu' = \mu|_K$ and $\sigma' = \sigma|_K$. Let $\Id : I \to I$ be the identity homomorphism. Obviously $(\alpha, \Id)$ is compatible with these actions. Hence, as a consequence of the preceding theorem, we have
 
 \begin{cor}\label{cor-sec4}
 There exists a homomorphism from $\RH^n_N(H, I)$ to $\RH^n_N(K, I)$.
 \end{cor}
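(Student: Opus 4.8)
The plan is to apply the preceding theorem (the functoriality statement) to the specific compatible pair $(\alpha, \Id)$ coming from the inclusion of a left ideal. First I would observe that since $K$ is a left ideal of $H$, the embedding $\alpha : K \to H$ is in particular a brace homomorphism, so it is a legitimate input to the functoriality machinery. Next I would check that restricting the actions does give a genuine pair of actions of $K$ on $I$: since $\nu : (H, \circ) \to \Aut(I,+)$ is a group homomorphism and $\sigma$ is an anti-homomorphism, and since $(K, \circ)$ is a subgroup of $(H, \circ)$ (being a subbrace), the restrictions $\nu' := \nu|_{(K,\circ)}$ and $\sigma' := \sigma|_{(K,\circ)}$ are again a homomorphism and an anti-homomorphism, hence a pair of actions of $K$ on $I$. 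Note we do not even need the \emph{good pair} condition here, since the statement of Corollary \ref{cor-sec4} only concerns the complexes $RC^n_N$; but in any case, if $(\nu,\sigma)$ is good for $H$ then $(\nu',\sigma')$ is automatically good for $K$, as the defining identity holds for all $h_1, h_2 \in H$ and in particular for all $h_1, h_2 \in K$.

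The key verification is that $(\alpha, \Id)$ is compatible with the pair $(\nu,\sigma)$ on $H$ and the pair $(\nu',\sigma')$ on $K$. Unwinding the definition, compatibility requires $\Id(\nu_{\alpha(k)}(y)) = \nu'_k(\Id(y))$ and $\Id(\sigma_{\alpha(k)}(y)) = \sigma'_k(\Id(y))$ for all $k \in K$, $y \in I$. Since $\alpha$ is the inclusion, $\alpha(k) = k$, and since $\nu'_k = \nu_k$ and $\sigma'_k = \sigma_k$ by definition of the restricted actions, both identities are trivially true. Hence $(\alpha, \Id)$ is compatible, and the preceding theorem applies: it produces a homomorphism $(\alpha, \Id)^n : C^n(H,I) \to C^n(K,I)$ commuting with the coboundary maps, which induces homomorphisms on cohomology, and in particular on $\RH^n_N(H,I) \to \RH^n_N(K,I)$. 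This last induced map is exactly the asserted restriction homomorphism, and the proof is complete.

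I do not anticipate any real obstacle here: this is a formal corollary, and the only thing to be careful about is stating cleanly why the pair $(\alpha, \Id)$ meets the hypotheses of the functoriality theorem (namely that $\alpha$ is a brace homomorphism because a left ideal is a subbrace, and that the restricted actions are compatible with it in the required sense). If anything required a word of comment it would be the implicit claim that $(K, \circ)$ is a subgroup of $(H,\circ)$ so that the restricted actions make sense — but this is built into the definition of a left ideal (it is first of all a subbrace). So the write-up should be just a few lines: recall that $K$ being a left ideal is a subbrace with $\alpha$ the inclusion brace homomorphism; note $(\nu', \sigma') = (\nu|_K, \sigma|_K)$ is a pair of actions of $K$ on $I$; verify $(\alpha, \Id)$ is compatible; and invoke the preceding theorem to get the induced map $\RH^n_N(H, I) \to \RH^n_N(K, I)$.
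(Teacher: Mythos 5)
Your proposal is correct and follows exactly the paper's route: the paper likewise takes $\alpha : K \to H$ to be the inclusion, restricts the actions to $(\nu|_K, \sigma|_K)$, notes that $(\alpha, \Id)$ is compatible with these actions, and invokes the preceding functoriality theorem to obtain the restriction homomorphism $\res^H_K : \RH^n_N(H,I) \to \RH^n_N(K,I)$. Your write-up simply spells out the (trivial) compatibility check that the paper dismisses with the word ``obviously.''
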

 Such a homomorphism is called the \emph{restriction} homomorphism, denoted by $\res^H_K$.


 \section{Wells' type exact sequence for skew braces}

In this section we establish various group actions on the set  $\Ext(H, I)$ and construct an exact sequence connecting certain automorphism groups of $E$ with $\Ho_N^2(H, I)$ for a given extension 
$$\mathcal{E} : 0  \rightarrow I \stackrel{i}{\rightarrow} E \stackrel{\pi}{\rightarrow} H \rightarrow 0$$
of left skew braces. We start with an  action of  $\Autb(H) \times \Autb(I)$ on $\Ext(H, I)$.

Let $H$ be any skew left brace and $I$ a trivial brace.  For a pair  $(\phi, \theta) \in \Autb(H) \times \Autb(I)$ of brace automorphisms and an  extension  
$$\mathcal{E} :  0 \rightarrow I \stackrel{i}{\rightarrow} E \stackrel{\pi}{\rightarrow} H \rightarrow 0$$
of $H$ by $I$,  we can define a new extension
$$\mathcal{E}^{(\phi, \theta)} : 0 \rightarrow I \stackrel{i\theta}{\longrightarrow} E \stackrel{\phi^{-1} \pi}{\longrightarrow} H \rightarrow 0$$
of $H$ by $I$. 

Let 
$$\mathcal{E}_1:  0  \rightarrow I \stackrel{i}{\rightarrow} E_1 \stackrel{\pi}{\rightarrow}  H \rightarrow 0$$
 and
  $$\mathcal{E}_2:  0  \rightarrow I \stackrel{i'}{\rightarrow} E_2 \stackrel{\pi'}{\rightarrow}  H \rightarrow 0$$
be two equivalent extensions of $H$ by $I$. Then it is not difficult to show that  the extensions $\mathcal{E}_1^{(\phi, \theta)}$ and $\mathcal{E}_2^{(\phi, \theta)}$ are also equivalent for any $(\phi, \theta) \in \Autb(H) \times \Autb(I)$. Thus, for a given $(\phi, \theta) \in \Autb(H) \times \Autb(I)$, we can define a map from $\Ext(H, I)$ to itself given by 
\begin{equation}\label{act1 sb}
[\mathcal{E}] \mapsto  [ \mathcal{E}^{(\phi, \theta)}].
\end{equation}
 If $\phi$ and $\theta$ are identity automorphisms, than obviously $\mathcal{E}^{(\phi, \theta)} = \mathcal{E}$. It is also easy to see that  
$$[\mathcal{E}]  ^{(\phi_1, \theta_1) (\phi_2, \theta_2)}=  \big([\mathcal{E}]^{(\phi_1, \theta_1)}\big)^{(\phi_2, \theta_2)}.$$
We conclude that the association \eqref{act1 sb} gives an action of the group $\Autb(H) \times \Autb(I)$  on the set $\Ext(H, I)$.

As we know that $\Ext(H, I) = \bigsqcup_{(\nu, \mu, \sigma)} \Ext_{(\nu, \mu, \sigma)}(H, I)$. \emph{Let $(\nu, \mu, \sigma)$ be an arbitrary but fixed good triplet of actions of $H$ on $I$.}  Let $\C_{(\nu, \mu, \sigma)}$ denote the stabiliser of $\Ext_{(\nu, \mu, \sigma)}(H, I)$ in $\Autb(H) \times \Autb(I)$; more explicitly
$$\C_{(\nu, \mu, \sigma)} = \{ (\phi, \theta) \in \Autb(H) \times \Autb(I) \mid \nu_h=\theta^{-1}\nu_{\phi(h)}\theta,  \mu_h = \theta^{-1}\mu_{\phi(h)}\theta \mbox{ and } \sigma_h = \theta^{-1}\sigma_{\phi(h)}\theta \}.$$
Notice that  $\C_{(\nu, \mu, \sigma)}$ is a subgroup of $\Autb(H) \times \Autb(I)$, and it  acts on  $\Ext_{(\nu, \mu, \sigma)}(H, I)$ by the same rule as given in \eqref{act1 sb}. Further notice that $\C_{(\nu, \mu, \sigma)} = \Autb(H) \times \Autb(I)$ when the action $(\nu, \mu, \sigma)$ is trivial.

Next we consider an action of $ \C_{(\nu, \mu, \sigma)}$ on $\Ho^2_N(H, I)$.
Let $(\phi, \theta) \in \Aut(H) \times \Aut(I)$ and $f \in \Fun(H^n, I)$, where $n \ge 1$ be an integer. Define $f^{(\phi, \theta)} : H^n \to I$ by setting
$$f^{(\phi, \theta)}(h_1, h_2, \ldots, h_n) := \theta^{-1}(f(\phi(h_1), \phi(h_2), \ldots, \phi(h_n))\big).$$ 
It is not difficult to see that the group $\Autb(H) \times \Autb(I)$ acts on  the group $\Fun(H^n, I)$ as well as on the group $C_N^{i,j}(H, I)$, by automorphisms, given by the association
\begin{equation}\label{act2 sb}
f \mapsto f^{(\phi, \theta)}.
\end{equation}
It is also obvious that  $\C_{(\nu, \mu, \sigma)}$ acts on both of these sets. We are interested in the action of $\C_{(\nu, \mu, \sigma)}$ on $\Ho_N^2(H, I)$. The association \eqref{act2 sb} induces an action of $\C_{(\nu, \mu, \sigma)}$ on $C^2_N = C^{0,2}_N(H, I) \oplus C^{1,1}_N(H, I)$ by setting 
\begin{equation}\label{act3 sb}
(\beta, \tau) \mapsto \big(\beta^{(\phi, \theta)}, \tau^{(\phi, \theta)}\big).
\end{equation}

\begin{lemma}\label{lemma-act3 sb}
For $(\phi, \theta) \in \C_{(\nu, \mu, \sigma)}$,  the following hold:

 (i) If $(\beta, \tau) \in \Z^2_N(H,I)$, then $\big(\beta^{(\phi, \theta)}, \tau^{(\phi, \theta)}\big) \in \Z^2_N(H,I)$.

(ii) If $(\beta, \tau) \in \B^2_N(H,I)$, then $\big(\beta^{(\phi, \theta)}, \tau^{(\phi, \theta)}\big) \in \B^2_N(H,I)$.

Hence the association \eqref{act3 sb} gives an action of $\C_{(\nu, \mu, \sigma)}$ on $\Ho_N^2(H, I)$ by automorphisms if we define
$$[(\beta, \tau)]^{(\phi, \theta)} =[\big(\beta^{(\phi, \theta)}, \tau^{(\phi, \theta)}\big)].$$
\end{lemma}
\begin{proof} 
That $\partial^{0,2}_v \big(\beta^{(\phi, \theta)}\big)$ is a zero function in $\C_N^3$ holds trivially.  Since $(\phi, \theta) \in \C_{(\nu, \mu, \sigma)}$, we have
$$\nu_{h_1}(\theta^{-1}(\tau(\phi(h_2), \phi(h_3))))=\theta^{-1}\big(\nu_{\phi(h_1)}(\tau(\phi(h_2), \phi(h_3)))\big)$$
and 
\begin{equation*}
 \nu_{h_1 \circ h_2}(\sigma_{h_2}(\nu^{-1}_{h_1}(\theta^{-1}(\phi(h_1), \phi(h_2)))))=\theta^{-1}\big(\nu_{\phi(h_1 \circ h_2)}(\sigma_{\phi(h_2)}(\nu^{-1}_{\phi(h_1)}(\tau(\phi(h_1),\phi(h_2)))))\big).
  \end{equation*}
Using these identities, for all $h_1, h_2, h_3 \in H$, it follows that
 \begin{equation*}
 \partial^{1,1}_h \big(\tau^{(\phi, \theta)}\big)(h_1, h_2, h_3) = \theta^{-1}\big( \partial^{1,1}_h (\tau)(\phi(h_1), \phi(h_2), \phi(h_3))\big) = 0.
 \end{equation*}
 Similar computations show that 
 \begin{equation*}
  \big(\partial^{0,2}_h\big(\beta^{(\phi, \theta)}\big) - \partial^{1,1}_v\big(\tau^{(\phi, \theta)}\big)\big)(h_1, h_2, h_3) = \theta^{-1}\big((\partial^{0,2}_h (\beta) - \partial^{1,1}_v(\tau)) (\phi(h_1), \phi(h_2), \phi(h_3))\big) = 0.
 \end{equation*}
This proves assertion (i).
 
 Now assume that  $(\beta, \tau) \in \B^2_N(H,I)$.  Then there exists a $\lambda \in C^1_N$ such that  $(\beta, \tau)=\partial^{1}(\lambda)$. It is now not difficult to see that 
 $$\big(\beta^{(\phi, \theta)}, \tau^{(\phi, \theta)}\big) = \partial^{1}\big(\lambda^{(\phi, \theta)}\big),$$
which establishes assertion (ii). That the association under consideration gives an action of $\C_{(\nu, \mu,  \sigma)}$ on $\Ho_N^2(H, I)$ by automorphisms, is now straighforward, which completes the proof. \hfill $\Box$

 \end{proof}

\begin{remark}
The action of $\C_{(\nu, \mu, \sigma)}$  on $\Ho^2_N(H,I)$, defined in the preceding lemma, can be transferred  on  
$\Ext_{(\mu,\sigma, \nu)}(H,I)$ through the bijection given in  Theorem  \ref{gbij-thm sb}. Notice that the resulting action of $\C_{(\nu, \mu, \sigma)}$ on $\Ext_{(\nu, \mu, \sigma)}(H,I)$ agrees with the action defined by \eqref{act1 sb}.
\end{remark}

We now consider the action of  $\Ho^2_N(H,I)$ onto itself by right translation, which is faithful and transitive. Again using Theorem \ref{gbij-thm sb}, we can transfer this action on $\Ext_{(\nu, \mu, \sigma)}(H,I) = \{[(H, I, \nu, \mu, \sigma, \beta, \tau)] \mid [(\beta, \tau)] \in \Ho^2_N(H,I)\}$. More precisely, for $[(\beta_1, \tau_1)] \in \Ho^2_N(H,I)$, the action is given by
$$[(H, I, \nu, \mu, \sigma, \beta, \tau)]^{[(\beta_1, \tau_1)]} = [(H, I, \nu, \mu, \sigma, \beta + \beta_1, \tau + \tau_1)]$$
for all  $[(H, I, \nu, mu, \sigma, \beta, \tau)] \in \Ext_{(\nu, \mu, \sigma)}(H,I)$. Notice that this action is also faithful and transitive.

Consider the semidirect product $\Gamma :=\C_{(\nu, \mu, \sigma)} \ltimes \Ho^2_N(H,I)$  under  the action defined in Lemma \ref{lemma-act3 sb}. We wish to define an action of $\Gamma$ on $\Ext_{(\nu, \mu, \sigma)}(H,I)$.  For $(c, h) \in \Gamma$ and $[\mathcal{E}] \in \Ext_{(\nu, \mu, \sigma)}(H,I)$, define 
\begin{equation}\label{act4 sb}
[\mathcal{E}]^{(c, h)} = ([\mathcal{E}]^c)^h.
\end{equation}

\begin{lemma}\label{wells2 sb}
The rule  in \eqref{act4 sb}   gives an action of  $\Gamma$ on $\Ext_{(\nu, \mu, \sigma)}(H,I)$.
\end{lemma}
\begin{proof}
 Notice that for $(c_1,h_1),  (c_2, h_2) \in \Gamma$,  $(c_1,h_1)(c_2,h_2)=(c_1c_2, h_1^{c_2} \, h_2)$.  So, it is enough to show that $\big([\mathcal{E}]^h\big)^c = \big([\mathcal{E}]^c\big)^{h^c}$ for each $c \in \C_{(\nu, \mu, \sigma)}$, $h \in \Ho^2_N(H,I)$ and $[\mathcal{E}] \in \Ext_{(\nu, \mu, \sigma)}(H,I)$.
We know that  $[\mathcal{E}]=[(H,I,\nu,\sigma,\beta,\tau)]$ for some $[(\beta, \tau)] \in \Ho^2_N(H,I)$.  Then, for $h=[(\beta_{h}, \tau_h)] \in \Ho^2_N(H, I)$, we have
\begin{eqnarray*}
\big([\mathcal{E}]^{h}\big)^c &=& [(H,I,\nu, \mu, \sigma, (\beta+\beta_h)^c, (\tau+\tau_h)^c)]\\
&=& [(H,I,\nu, \mu, \sigma, \beta^c+ \beta_h^c, \tau^c+ \tau_h^c)]\\
&=& ([(H,I,\nu, \mu, \sigma, \beta^c, \tau^c)])^{h^c}\\
&=& \big([\mathcal{E}]^c\big)^{h^c}.
\end{eqnarray*}
The proof is now complete. \hfill $\Box$

\end{proof}

 Let $[\mathcal{E}] \in \Ext_{(\nu, \mu, \sigma)}(H,I)$ be a fixed extension. Since the action of $\Ho^2_N(H,I)$ on $\Ext_{(\nu, \mu, \sigma)}(H,I)$ is transitive and faithful, for  each $c \in \C_{(\nu, \mu, \sigma)}$, there exists a unique element (say) $h_c$  in  $\Ho^2_N(H,I)$ such that  
 $$[\mathcal{E}]^{c} = [\mathcal{E}]^{h_c}.$$
 We thus have a well defined map $ \omega(\mathcal{E}): \C_{(\nu, \mu, \sigma)} \rightarrow \Ho^2_N(H,I)$ given by
 \begin{equation}\label{wells-map sb}
 \omega(\mathcal{E})(c)=h_c
 \end{equation}
 for $c \in \C_{(\nu, \mu, \sigma)}$. 
  
\begin{lemma}\label{wells3 sb}
The map $ \omega(\mathcal{E}): \C_{(\nu, \mu, \sigma)} \rightarrow \Ho^2_N(H,I)$ given in \eqref{wells-map sb} is a derivation with respect to the action of $\C_{(\nu, \mu, \sigma)}$ on $H^2_N(H,I)$ given in \eqref{act3 sb}.
\end{lemma}
\begin{proof}
Let $c_1, c_2 \in \C_{(\nu, \mu,  \sigma)}$ and  $\omega(\mathcal{E})(c_1c_2) = h_{c_1c_2}$.  Thus, by the definition of $\omega(\mathcal{E})$,   $[\mathcal{E}]^{c_1c_2} = [\mathcal{E}]^{h_{c_1c_2}}$.  Using the fact that  $\big([\mathcal{E}]^{h}\big)^{c} = \big([\mathcal{E}]^{c}\big)^{h^c}$ for each $c \in \C_{(\nu, \mu, \sigma)}$, $h \in \Ho^2_N(H,I)$, we have
\begin{eqnarray*}
 [\mathcal{E}]^{h_{c_1c_2}} & = &[\mathcal{E}]^{(c_1c_2)}\\
 &=& \big([\mathcal{E}]^{c_1}\big)^{c_2}\\
 &=& \big([\mathcal{E}]^{h_{c_1}}\big)^{c_2}\\
 &= & \big([\mathcal{E}]^{c_2}\big)^{(h_{c_1})^{c_2}}\\
 &=& \big([\mathcal{E}]^{h_{c_2}}\big)^{(h_{c_1})^{c_2}}\\
 &=& [\mathcal{E}]^{\big(h_{c_2} + (h_{c_1})^{c_2} \big)}.
 \end{eqnarray*}
 Since the action of $\Ho^2_N(H,I)$ on $\Ext_{(\nu, \mu, \sigma)}(H,I)$  is faithful,  it follows that $h_{c_1c_2} = (h_{c_1})^{c_2} + h_{c_2}$. This implies that $\omega(\mathcal{E})(c_1c_2)=\big(\omega(\mathcal{E})(c_1)\big)^{c_2}+\omega(\mathcal{E})(c_2)$; hence $\omega(\mathcal{E})$ is a derivation. \hfill $\Box$
 
\end{proof}

Let 
$$\mathcal{E}: 0 \rightarrow I \rightarrow E \overset{\pi}\rightarrow H $$
be an extension of a left skew brace $H$ by a trivial brace $I$ such that $[\mathcal{E}] \in \Ext_{(\nu, \mu, \sigma)}(H,I)$.
Let  $\Autb_I(E)$ denote the subgroup of $\Autb(E)$ consisting of all automorphisms of $E$ which normalize $I$, that is,
$$\Autb_I(E) := \{ \gamma \in \Autb(E) \mid \gamma(y) \in I \mbox{ for all }  y \in I\}.$$ 
For $\gamma \in \Autb_I(E)$, set $\gamma_I := \gamma |_I$, the restriction of $\gamma$ to $I$, and $\gamma_H$ to be the automorphism of $H$ induced by $\gamma$. More precisely, $\gamma_H(h) = \pi(\gamma(s(h)))$ for all $h \in H$, where $s$ is an st-section of $\pi$. Notice that the definition of $\gamma_H$ is independent of  the choice of an st-section. Define a map $\rho(\mathcal{E}) :  \Autb_I(E) \rightarrow  \Autb(H) \times \Autb(I)$ by
$$\rho(\mathcal{E})(\gamma)=(\gamma_H, \gamma_I).$$ 
Although $\omega(\mathcal{E})$ is not a homomorphism, but we can still talk about its set theoretic kernel, that is,
$$\Ker(\omega(\mathcal{E})) = \{c \in C_{(\mu,\sigma, \nu)} \mid [\mathcal{E}]^c=[\mathcal{E}]\}.$$

\begin{prop}\label{wells4 sb}
For the extension $\mathcal{E}$,   $\IM(\rho(\mathcal{E})) \subseteq \C_{(\nu, \mu, \sigma)}$ and  
$\IM(\rho(\mathcal{E})) = \Ker(\omega(\mathcal{E}))$.
\end{prop}
 \begin{proof}
For the first assertion, we are required  to show that $\nu_{h} = \gamma_I^{-1} \nu_{\gamma_H(h)} \gamma_I$, $\mu_{h} = \gamma_I^{-1} \mu_{\gamma_H(h)} \gamma_I$ and  $\sigma_{h} = \gamma_I^{-1} \sigma_{\gamma_H(h)} \gamma_I$ for all $h \in H$. Let $s$ be an st-section of $\pi$  and $ x \in E$. Notice that $\gamma_I^{-1}$ is the restriction of $\gamma^{-1}$ on $I$.   Also notice that  for a given $x \in E$, $s(\pi(x)) = x \circ y_h$ for some $y_h \in I$.  Now for $h \in H$ and $ y \in I$, we have
\begin{eqnarray*}
\gamma_I^{-1} \nu_{\gamma_H(h)} \gamma_I(y) &=& \gamma^{-1} \big(\nu_{\pi(\gamma(s(h)))}(\gamma(y))\big)\\
&=& \gamma^{-1}\big(-s(\pi(\gamma(s(h)))) + s(\pi(\gamma(s(h)))) \circ \gamma (y)\big)\\
&=& \gamma^{-1}\big(- \gamma(s(h)) \circ y_{\gamma(s(h))} + \gamma(s(h)) \circ y_{\gamma(s(h))}\circ \gamma(y) \big)\\
&=& - s(h) \circ \gamma^{-1}( y_{\gamma(s(h))}) + s(h) \circ \gamma^{-1}( y_{\gamma(s(h))})\circ y \\
&=& - s(h) \circ \gamma^{-1}( y_{\gamma(s(h))}) + s(h)\circ \big( \gamma^{-1}( y_{\gamma (s(h))}) + y\big)\\
&=& -s(h) + s(h)\circ y \quad \mbox{ (using \eqref{bcomp})}\\
&=& \nu_h(y).
\end{eqnarray*}
Hence $\nu_{h} = \gamma_I^{-1} \nu_{\gamma_H(h)} \gamma_I$.  One can similarly show that $\mu_{h} = \gamma_I^{-1} \mu_{\gamma_H(h)} \gamma_I$ and $\sigma_{h} = \gamma_I^{-1} \sigma_{\gamma_H(h)} \gamma_I$. 

Now we prove the second assertion. Let $\rho(\mathcal{E})(\gamma) =  (\gamma_H, \gamma_I)$ for  $\gamma \in \Autb_I(E)$.  We know that $s(\pi(x))=x+y_x $ for some $y_x \in I$.  Thus we have
\begin{eqnarray*}
\gamma_H^{-1}(\pi(\gamma(s(h)))) &=& \pi\big(\gamma^{-1}(s(\pi(\gamma(s(h)))))\big)\\
&=& \pi\big(\gamma^{-1}\big(\gamma(s(h))+y_{\gamma(s(h))}\big)\big)\\
&= & h,
\end{eqnarray*}
 which implies that the diagram  
$$\begin{CD}
0 @>>> I @>>> E@>{{\pi} }>> H @>>> 0\\ 
&& @V{\text{Id}} VV @V{\gamma} VV @V{\text{Id} }VV \\
0 @>>> I @>{\gamma_I}>> E @>{\gamma_H^{-1} \pi}>> H  @>>> 0
\end{CD}$$
commutes. Hence $[(\mathcal{E})]^{(\gamma_H, \gamma_I)} =[(\mathcal{E})]$, which shows that $\IM(\rho(\mathcal{E})) \subseteq \Ker(\omega(\mathcal{E}))$. 

Conversely, if $(\phi, \theta) \in  \Ker(\omega(\mathcal{E}))$,  then there exists a brace homomorphism $\gamma: E \rightarrow E$ such that the diagram 
$$\begin{CD}
0 @>>> I @>>> E@>{{\pi} }>> H @>>> 0\\ 
&& @V{\text{Id}} VV@V{\gamma} VV @V{\text{Id} }VV \\
0 @>>> I @>{\theta}>> E @>{ \phi^{-1} \pi}>> H  @>>> 0
\end{CD}$$
commutes. It is now obvious that $\gamma \in \Autb_I(E)$, $\theta = \gamma_I$ and $\phi = \gamma_H$. Hence
 $\rho(\mathcal{E})(\gamma)=(\phi, \theta)$, which completes the proof.  \hfill   $\Box$

\end{proof}

Continuing with the above setting, set $\Autb^{H, I}(E) := \{\gamma \in \Autb_I(E) \mid \gamma_I = \Id,  \gamma_H = \Id\}$. Notice that  $\Autb^{H,I}(E)$ is precisely the kernel of $\rho(\mathcal{E})$. Hence, using Proposition \ref{wells4 sb}, we get

\begin{thm}\label{wells5 sb}
Let $\mathcal{E}: 0 \rightarrow I \rightarrow E \overset{\pi}\rightarrow H$ be a extension of a left skew  brace  $H$ by a trivial  brace $I$ such that $[\mathcal{E}] \in \Ext_{(\nu, \mu, \sigma)}(H,I)$. Then we have the following exact sequence of groups 
$$0 \rightarrow \Autb^{H,I}(E) \rightarrow \Autb_I(E) \stackrel{\rho(\mathcal{E})}{\longrightarrow} \C_{(\nu, \mu,\sigma)} \stackrel{\omega(\mathcal{E})}{\longrightarrow} \Ho^2_N(H,I),$$
where $\omega(\mathcal{E})$ is, in general,  only a derivation.
\end{thm}

We further prove
\begin{prop}\label{wells6 sb}
Let  $\mathcal{E} :  0 \rightarrow I \rightarrow E \overset{\pi}\rightarrow H$ be an extension of $H$ by $I$ such that $[\mathcal{E}] \in \Ext_{(\nu, \mu, \sigma)}(H, I)$.  Then  $\Autb^{H,I}(E) \cong \Z^1_N(H,I)$.
\end{prop}
 \begin{proof}
We know that every element $x \in E$ has a unique expression of the form $x=s(h)+y=s(h) \circ \nu^{-1}_h(y)$ for some $h \in H$ and $y \in I$. Let us define a map $\eta: \Z^1_N(H,I)  \rightarrow  \Autb^{H,I}(E)$ by 
$$\eta(\lambda)((s(h)+y) = s(h)+ \lambda(h) + y,$$
where $\lambda \in \Z^1_N(H,I)$.
Notice that the image of $\eta(\lambda)$ is independent of the choice of an st-section.
We claim that $\eta(\lambda) \in  \Autb^{H,I}(E)$.  For $h_1,h_2 \in H$ and $y_1, y_2 \in I$, we know that 
$$\tau(h_1,h_2) =\nu_{h_1\circ h_2}\big(s(h_1 \circ h_2)^{-1}\circ s(h_1)\circ s(h_2)\big)$$
 as defined in \eqref{cocycle2 sb}. Set $\tau_1 = \nu_{h_1 \circ h_2}^{-1} \tau$. Also notice that $s(h) \circ y = s(h) + \nu_h(y)$. We then have 
\begin{eqnarray*}
\eta(\lambda)\big((s(h_1)+y_1) \circ(s(h_2) + y_2)\big) &= &\eta(\lambda)\big((s(h_1)\circ \nu^{-1}_{h_1}(y_1)) \circ (s(h_2)\circ \nu^{-1}_{h_2}(y_2))\big)\\
&= &\eta(\lambda)\big(s(h_1) \circ s(h_2)\circ \sigma_{h_2}(\nu^{-1}_{h_1}(y_1))\circ \nu^{-1}_{h_2}(y_2)\big)\\
&=& \eta(\lambda)\big(s(h_1\circ h_2) \circ \big(\tau_1(h_1,h_2)+ \sigma_{h_2}(\nu^{-1}_{h_1}(y_1))+ \nu^{-1}_{h_2}(y_2)\big)\big)\\
&=& \eta(\lambda)\big(s(h_1\circ h_2)+\nu_{h_1\circ h_2} \big(\tau_1(h_1,h_2)+ \sigma_{h_2}(\nu^{-1}_{h_1}(y_1))+ \nu^{-1}_{h_2}(y_2)\big)\big)\\
&=& s(h_1\circ h_2)+\lambda(h_1 \circ h_2)+ \tau(h_1,h_2)+ \nu_{h_1\circ h_2} (\sigma_{h_2}(\nu^{-1}_{h_1}(y_1)))\\
& & + \nu_{h_1}(y_2)\\
&=& s(h_1\circ h_2) +\tau(h_1,h_2) +\nu_{h_1\circ h_2}(\sigma_{h_2}(\nu^{-1}_{h_1}(y_1+\lambda(h_1))))\\
& & +\nu_{h_1}(y_2+\lambda(h_2))\\
&=& (s(h_1)+\lambda(h_1)+y_1)\circ (s(h_2)+\lambda(h_2)+y_2)\\
&=& \eta(\lambda)(s(h_1)+y_1) \circ \eta(\lambda)(s(h_2) +y_2).
\end{eqnarray*}
It is easy to see that 
$$ \eta(\lambda)\big((s(h_1)+y_1) +(s(h_2) +y_2)\big) = \eta(\lambda)(s(h_1)+y_1)+\eta(\lambda)(s(h_2)+y_2).$$
That $\eta$ is injective, follows from the injectivity of $s$ and the fact that $\lambda(0) = 0$.  Surjectivity  of $\eta(\lambda)$ is immediate. This shows that $\eta(\lambda)$ is an automorphism of $E$. We also  have $\eta(\lambda)(s(h))=s(h)+\lambda(h)$ and $\eta(\lambda)(y)=y$ for all $h \in H$ and $ y \in I$. Hence  $\eta(\lambda) \in  \Autb^{H,I}(E)$. 

 We'll now define a map $\zeta :  \Autb^{H,I}(E)  \rightarrow \Z^1_N(H,I)$. For $\gamma \in \Autb^{H,I}(E)$ and  $h \in H$, there exists a unique element (say) $y^{\gamma}_h \in I$ such that  $\gamma(s(h)) = s(h)+ y^{\gamma}_h$ for some unique $y^{\gamma}_h \in I$. Thus, for $h \in H$,  define $\zeta$ by 
 $$\zeta(\gamma)(h) = y^{\gamma}_h.$$
 Notice that this is independent of the choice of an st-section. Since $\sigma_h^{-1} = \sigma_{h^{-1}}$, it follows that 
 $$\gamma(s(h)^{-1})=s(h)^{-1} \circ \sigma_{h^{-1}}\big(\nu^{-1}_h((y^{\gamma}_h)^{-1})\big)$$
for all $h \in H$. Further, since $s(h_1 \circ h_2)^{-1} \circ s(h_1) \circ s(h_2) \in I$ for all $h_1, h_2 \in H$, we have
\begin{eqnarray*}
s(h_1 \circ h_2)^{-1} \circ s(h_1) \circ s(h_2)&=&\gamma\big(s(h_1 \circ h_2)^{-1} \circ s(h_1) \circ s(h_2)\big)\\
&=& \gamma(s(h_1 \circ h_2)^{-1}) \circ \gamma(s(h_1)) \circ \gamma(s(h_2))\\
&=& s(h_1 \circ h_2)^{-1}\circ \sigma_{(h_1\circ h_2)^{-1}}\big(\nu^{-1}_{h_1\circ h_2}(y^\gamma_{h_1\circ h_2})^{-1}\big) \circ s(h_1) \circ \nu^{-1}_{h_1}(y^{\gamma}_{h_1})\\
& & \circ \, s(h_2) \circ \nu^{-1}_{h_2}(y^{\gamma}_{h_2})\\
&=&s(h_1 \circ h_2)^{-1}\circ \sigma_{(h_1\circ h_2)^{-1}}\big(\nu^{-1}_{h_1\circ h_2}(y^\gamma_{h_1\circ h_2})^{-1}\big) \circ \sigma_{h_1^{-1}}(\nu^{-1}_{h_1}(y^{\gamma}_{h_1}))\\
& & \circ \; \sigma_{(h_1 \circ h_2)^{-1}}\big( \nu^{-1}_{h_2}(y^{\gamma}_{h_2})\big)\circ s(h_1) \circ s(h_2),
 \end{eqnarray*}
which,  using $(y^{\gamma}_{h_1\circ h_2})^{-1}=-y^{\gamma}_{h_1\circ h_2}$, implies that 
$$\sigma_{(h_1\circ h_2)^{-1}}\big(\nu^{-1}_{h_1\circ h_2}(y^{\gamma}_{h_1\circ h_2})\big) = \sigma_{h_1^{-1}}(\nu^{-1}_{h_1}(y^{\gamma}_{h_1})) +  \sigma_{(h_1 \circ h_2)^{-1}}\big( \nu^{-1}_{h_2}(y^{\gamma}_{h_2})\big).$$
This, on further simplification, finally gives
$$y^{\gamma}_{h_1 \circ h_2} = \nu_{h_1\circ h_2}\big(\sigma_{h_2} (\nu^{-1}_{h_1}(y^{\gamma}_{h_1}))\big)+ \nu_{h_1}(y^{\gamma}_{h_2}).$$
For $h_1, h_2 \in H$, it easily follows that
$$y_{h_1 + h_2}^{\gamma} = \mu_{h_2}(y_{h_1}^{\gamma}) + y_{h_2}^{\gamma}.$$
We have now proved that  $\zeta(\gamma)$ is a derivation. That both $\eta$ and $\zeta$ are homomorphisms, and $\eta \zeta$ and $\zeta \eta$ are, respectively, the identity elements of  $\Autb^{H,I}(E)$ and $\Z^1_N(H,I)$ is obvious.  Hence $\Autb^{H,I}(E) \cong  \Z^1_N(H,I)$, and the proof is complete. \hfill $\Box$

 \end{proof}

We finally get the following Wells' like exact sequence for skew braces.
\begin{thm}\label{wells7 sb}
Let $\mathcal{E}: 0 \rightarrow I \rightarrow E \overset{\pi}\rightarrow H$ be an extension of a left skew brace  $H$ by a trivial  brace $I$ such that $[\mathcal{E}] \in \Ext_{(\nu, \mu, \sigma)}(H,I)$. Then we have the following exact sequence of groups 
$$0 \rightarrow \Z^1_N(H,I) \rightarrow \Autb_I(E) \stackrel{\rho(\mathcal{E})}{\longrightarrow} \C_{(\nu, \mu,\sigma)} \stackrel{\omega(\mathcal{E})}{\longrightarrow} \Ho^2_N(H,I),$$
where $\omega(\mathcal{E})$ is, in general,  only a derivation.
\end{thm}

A pair of brace automorphisms $(\phi, \theta) \in \Autb(H) \times \Autb(I)$ is said to be \emph{inducible} if $(\phi, \theta) \in \im(\rho(\mathcal{E}))$. As a straightforward application of Theorem \ref{wells7 sb}, we get

\begin{cor}
Let $\mathcal{E}: 0 \rightarrow I \rightarrow E \overset{\pi}\rightarrow H$ be an extension of a left skew brace  $H$ by a trivial  brace $I$ such that $[\mathcal{E}] \in \Ext_{(\nu, \mu, \sigma)}(H,I)$. If  $\Ho^2_N(H,I)$ is a trivial group, then  every element of $\C_{(\nu, \mu,\sigma)}$ is inducible. 
\end{cor}

Let $\mathcal{E}: 0 \rightarrow I \rightarrow E \rightarrow H \rightarrow 0$ be an extension of a left skew brace $H$ by a trivial brace $I$. Then $I$ can be viewed as an $H$-module through the corresponding good triplet of actions $(\nu, \mu, \sigma)$ as defined above.  An automorphism $\phi$ of the skew brace  $H$ defines a new $H$-module structure on $I$ given by $(\nu \phi, \mu \phi ,\sigma \phi)$, which we denote by $I_{\phi}$, where $\nu \phi(h) = \nu_{\phi(h)}$,  $\mu \phi(h) (y)  = - s(\phi(h)) + y + s(\phi(h))$ and  $\sigma \phi(h)(y) = s(\phi(h))^{-1}ys(\phi(h))$ for $h \in H$ and $y \in I$. It is not difficult to show that the automorphism $\phi$ induces an isomorphism $\phi^*$ of cohomology groups
$\phi^* : \Ho^2_N(H,I) \rightarrow \Ho^2_N(H,I_\phi)$
defined by
$$\phi^*([(\beta, \tau)])= [\big(\beta^{(\phi, \Id)}, \tau^{(\phi, \Id)}\big)].$$
 Further, any $H$-module isomorphism  $\theta : I \rightarrow I_\phi$ induces an isomorphism $\theta^*$ of cohomology groups
 $\theta^* : \Ho^2_N(H,I) \rightarrow \Ho^2_N(H,I_\phi)$
 given by
 $$\theta^*([(\beta, \tau)]) = [\big(\beta^{(\Id, \theta^{-1})}, \tau^{(\Id, \theta^{-1})}\big)].$$
 With this set-up, we have
 
 \begin{thm} 
A pair of automorphisms $(\phi, \theta) \in \Autb(H) \times \Autb(I)$  is inducible if and only if the following conditions hold:

(i)  $\theta : I \rightarrow I_\phi $ is an isomorphism of $H$-modules.

(ii)  $\theta^*([(\beta, \tau)])= \phi^*([(\beta, \tau)])$.
\end{thm}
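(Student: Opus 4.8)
The plan is to translate the inducibility criterion of Theorem~\ref{wells7} into module-theoretic language. Recall that $(\phi, \theta) \in \Autb(H) \times \Autb(I)$ is inducible precisely when there is some $\gamma \in \Autb(E)$ with $(\gamma_H, \gamma_I) = (\phi, \theta)$; such a $\gamma$ automatically normalizes $I$, so $\gamma \in \Autb_I(E)$ and $(\phi, \theta) \in \IM(\rho(\mathcal{E}))$. By Proposition~\ref{wells4}, $\IM(\rho(\mathcal{E})) = \Ker(\omega(\mathcal{E})) \subseteq \C_{(\nu, \sigma)}$, so inducibility is equivalent to the conjunction of (a) $(\phi, \theta) \in \C_{(\nu, \sigma)}$ and (b) $\omega(\mathcal{E})(\phi, \theta) = 0$ in $\Ho^2_N(H, I)$. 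The strategy is to show that (a) is equivalent to condition~(i) of the statement, and that, given (a), condition~(b) is equivalent to condition~(ii).

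First I would unwind condition~(i). Saying $\theta : I \to I_\phi$ is an $H$-bi-module homomorphism means $(\Id, \theta)$ is compatible with $(\nu, \sigma)$ and the twisted pair $(\nu\phi, \sigma\phi)$, i.e.\ $\theta(\nu_h(y)) = (\nu\phi)_h(\theta(y)) = \nu_{\phi(h)}(\theta(y))$ and similarly for $\sigma$. Rearranging, this is exactly $\nu_h = \theta^{-1}\nu_{\phi(h)}\theta$ and $\sigma_h = \theta^{-1}\sigma_{\phi(h)}\theta$ for all $h \in H$, which is the defining condition of membership in $\C_{(\nu, \sigma)}$. So (i) $\Leftrightarrow$ (a); moreover $\theta$ being a bijection is automatic since $\theta \in \Autb(I)$, so no extra content is hidden there. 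This step is essentially bookkeeping: carefully matching the compatibility diagram \eqref{comp-homo} with the definition of $\sigma\phi$, noting that $\sigma\phi(h)(y) = s(\phi(h))^{-1} y\, s(\phi(h)) = \sigma_{\phi(h)}(y)$ by \eqref{action2}.

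Next, assuming $(\phi, \theta) \in \C_{(\nu, \sigma)}$, I would identify $\omega(\mathcal{E})(\phi, \theta)$ with the difference $\theta^*([(\beta, \tau)]) - \phi^*([(\beta, \tau)])$ (written additively in $\Ho^2_N(H, I)$), where $(\beta, \tau) \in \Z^2_N(H, I)$ is a cocycle representing $[\mathcal{E}]$ via the bijection of Theorem~\ref{gbij-thm}. By construction $\omega(\mathcal{E})(\phi, \theta)$ is the unique class $h_{(\phi,\theta)}$ with $[\mathcal{E}]^{(\phi,\theta)} = [\mathcal{E}]^{h_{(\phi,\theta)}}$; under the identification $\Ext_{\nu,\sigma}(H, I) \cong \Ho^2_N(H, I)$ the left translation action sends $[(\beta,\tau)]$ to $[(\beta + \beta_1, \tau + \tau_1)]$, so it suffices to compute the class of $\mathcal{E}^{(\phi,\theta)}$ explicitly in terms of $(\beta, \tau)$. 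Chasing the extension $\mathcal{E}^{(\phi,\theta)} : 0 \to I \xrightarrow{i\theta} E \xrightarrow{\phi^{-1}\pi} H \to 0$ through \eqref{cocycle1}–\eqref{cocycle2} with the st-section $h \mapsto s(\phi(h))$ (post-composed appropriately with $\theta^{-1}$), I expect to find that $\mathcal{E}^{(\phi,\theta)}$ is represented by $\big(\beta^{(\phi, \theta)}, \tau^{(\phi, \theta)}\big)$, which in the notation of the statement is exactly $\theta^* \phi^{*-1}$ applied to $(\beta, \tau)$, up to signs. Hence $\omega(\mathcal{E})(\phi,\theta) = 0 \Leftrightarrow \theta^*([(\beta,\tau)]) = \phi^*([(\beta,\tau)])$, which is condition~(ii). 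Combining with the first step gives the theorem.

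The main obstacle I anticipate is the third step: carefully verifying that the cocycle of the twisted extension $\mathcal{E}^{(\phi,\theta)}$ is precisely $\big(\beta^{(\phi,\theta)}, \tau^{(\phi,\theta)}\big)$, keeping track of the noncommutative conjugation-by-$\nu$ terms in \eqref{cocycle2} and making sure the identities $\nu_h = \theta^{-1}\nu_{\phi(h)}\theta$, $\sigma_h = \theta^{-1}\sigma_{\phi(h)}\theta$ are invoked at exactly the right places so that $\theta^{-1}$ commutes past the action operators. Once the action-on-cohomology from Lemma~\ref{lemma-act3} is matched with the translate computed from the explicit extension, the equivalence is immediate; the rest is a direct unpacking of definitions. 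I would also remark that $\phi^*$ and $\theta^*$ both land in $\Ho^2_N(H, I_\phi)$, so condition~(ii) is a well-posed equality there, and that when $\phi = \Id$ the theorem recovers the classical statement that $\theta$ is inducible iff it is a module automorphism fixing the cohomology class.
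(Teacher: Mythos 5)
Your proposal is correct, but it takes a genuinely different route from the paper. The paper proves both directions by direct computation with a fixed st-section $s$: for the forward direction it expands $\gamma(x_1+x_2)=\gamma(x_1)+\gamma(x_2)$ and $\gamma(x_1\circ x_2)=\gamma(x_1)\circ\gamma(x_2)$ in coordinates to extract the coboundary relations \eqref{modeqn2} and its multiplicative analogue, and for the converse it explicitly constructs the inducing automorphism by $\gamma(s(h)+y)=s(\phi(h))+\lambda(h)+\theta(y)$ where $\lambda$ is produced by condition (ii). You instead route everything through the Wells sequence: inducibility is equivalent to $(\phi,\theta)\in\Ker(\omega(\mathcal{E}))$ by Proposition \ref{wells4}, membership in $\C_{(\nu,\sigma)}$ unwinds to condition (i) (correct: with $\alpha=\Id$ and $\nu'=\nu\phi$ the compatibility condition is literally $\nu_h=\theta^{-1}\nu_{\phi(h)}\theta$, and likewise for $\sigma$), and $\omega(\mathcal{E})(\phi,\theta)=0$ unwinds to condition (ii) once one knows that the twisted extension $\mathcal{E}^{(\phi,\theta)}$ is represented by the twisted cocycle $\big(\beta^{(\phi,\theta)},\tau^{(\phi,\theta)}\big)=(\theta^*)^{-1}\phi^*(\beta,\tau)$. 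You correctly flag this last identification as the one real computation; note the paper only asserts it in the unproved Remark following Lemma \ref{lemma-act3}, so your proof must actually carry out the section chase there (it does work: $h\mapsto s(\phi(h))$ is a section of $\phi^{-1}\pi$ and yields exactly $\theta^{-1}\beta(\phi(h_1),\phi(h_2))$, etc.). The trade-off is that your argument is shorter and more conceptual, reusing the exact-sequence machinery, while the paper's hands-on version is self-contained and produces the explicit formula for the inducing automorphism $\gamma$; your ordering "$\theta^*\phi^{*-1}$" should really be $(\theta^*)^{-1}\phi^*$, but the resulting equivalence $\phi^*([(\beta,\tau)])=\theta^*([(\beta,\tau)])$ is the same.
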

\begin{proof}
Let  $(\phi, \theta)$ be inducible. Then there exists an automorphism $\gamma \in \Autb_I(E)$ such that $(\phi, \theta) = (\gamma_H, \gamma_I)$.   For $h \in H $ and $y \in I$, we know that $\gamma(s(h)) = s(\gamma_H(h)) \circ y_h$ for some $y_h \in I$. We have
\begin{eqnarray}\label{modeqn1sb}
\gamma_I(\nu_h(y))& =&\gamma\big(-s(h)+s(h)\circ y\big)\nonumber\\
&=& -\gamma(s(h))+\gamma(s(h)) \circ \gamma(y) \nonumber\\
&=&-(s(\gamma_H(h))\circ y_h)+ s(\gamma_H(h))\circ( y_h + \gamma(y)) \nonumber\\
&=&- s(\gamma_H(h))+ s(\gamma_H(h))\circ  \gamma_I(y) \ \ \ \mbox{(using \eqref{bcomp})}\nonumber\\
&=& \nu_{\gamma_H(h)}(\gamma_I(y)).
\end{eqnarray}

Similarly it also follows that 
\begin{equation}\label{modeqn1sba }
\gamma_I(\sigma_h(y))=\sigma_{\gamma_H(h)}(\gamma_I(y))
\end{equation}
and 
\begin{equation}\label{modeqn1sbb}
\gamma_I(\mu_h(y))=\mu_{\gamma_H(h)}(\gamma_I(y)).
\end{equation}
 This shows that  $(\Id, \gamma_I)$ is compatible with the pairs of actions $(\nu, \mu, \sigma)$ and $(\nu \gamma_H, \mu \gamma_H,\sigma \gamma_H)$; hence condition (i) holds. 
 
 For each $h \in H$, there exists a unique element (say) $\lambda(h)$ in $I$ such that  $ \gamma(s(h)) =s(\gamma_H(h)) + \lambda(h)$.  It turns out that $\lambda : H \rightarrow I$ given by $h \mapsto \lambda(h)$  lies in $C_N^1$. Given elements $x_1, x_2 \in E$ have unique expressions of the form $x_1= s(h_1) + y_1$ and $x_2 = s(h_2) + y_2$ for some $h_1, h_2 \in H$ and $y_1$ and $ y_2 \in I$.  Now
\begin{eqnarray*}
\gamma(x_1+x_2) &=& \gamma\big(s(h_1+ h_2)+\beta(h_1,h_2)+\mu_{h_2}(y_1)+y_2\big)\\
&=& \gamma(s(h_1+ h_2))+\gamma_I(\beta(h_1,h_2))+\gamma_I (\mu_{h_2}(y_1)+y_2)\\
&=& s\big(\gamma_H(h_1)+\gamma_H(h_2)\big)+\lambda(h_1+h_2)+\gamma_I(\beta(h_1,h_2))+\gamma_I (\mu_{h_2}(y_1)+y_2).
\end{eqnarray*}
On the other hand, we have
\begin{eqnarray} 
\gamma(x_1)+\gamma(x_2)&=&s(\gamma_H(h_1))+\lambda(h_1)+\gamma_I(y_1)+s(\gamma_H(h_2))+\lambda(h_2)+\gamma_I(y_2) \nonumber\\
&=& s(\gamma_H(h_1))+s(\gamma_H(h_2))+\mu_{\gamma_H(h_2)}(\lambda(h_1)) +\mu_{\gamma_H(h_2)}(\gamma_I(y_1))+ \lambda(h_2) + \gamma_I(y_2) \nonumber\\
&=& s(\gamma_H(h_1)+\gamma_H(h_2))+\beta(\gamma_H(h_1), \gamma_H(h_2))+\mu_{\gamma_H(h_2)}(\lambda(h_1)) \nonumber\\
&& + \; \mu_{\gamma_H(h_2)}(\gamma_I(y_1))+ \lambda(h_2) + \gamma_I(y_2).\nonumber
\end{eqnarray}
Since $\gamma(x_1+x_2) = \gamma(x_1)+\gamma(x_2)$, preceding two equations, using \eqref{modeqn1sbb}, give
\begin{equation}\label{modeqn2}
\gamma_I(\beta(h_1, h_2))-\beta(\gamma_H(h_1), \gamma_H(h_2))=\mu_{\gamma_H(h_2)}(\lambda(h_1))-\lambda(h_1+h_2)+\lambda(h_2).\end{equation}

Notice that  $s(h) +\lambda(h)=s(h)\circ \nu^{-1}_{h}(\lambda(h))$ and $s(h) \circ \lambda(h) = s(h) + \nu_{h}(\lambda(h))$. By this and  using  \eqref{modeqn1sb}, \eqref{modeqn1sba } and \eqref{modeqn1sbb} we  have
\begin{eqnarray*}
\gamma(x_1 \circ x_2) &=&  \gamma\big((s(h_1)+y_1) \circ ((s(h_2)+y_2)\big)\\
 &=&  \gamma\big(s(h_1 \circ h_2) \circ \big(\nu^{-1}_{h_1 \circ h_2}(\tau(h_1, h_2)) + \sigma_{h_2}(\nu^{-1}_{h_1}(y_1)) + \nu^{-1}_{h_2}(y_2)\big)\big)\\
 &=& \gamma\big(s(h_1 \circ h_2)+\nu_{h_1
\circ h_2}(\sigma_{h_2}(\nu^{-1}_{h_1}(y_1)))+\nu_{h_1}(y_2)+\tau(h_1,h_2)\big)\\
&=& s(\gamma_H(h_1 \circ h_2)) +\lambda(h_1 \circ h_2)+\gamma_I(\nu_{h_1
\circ h_2}(\sigma_{h_2}(\nu^{-1}_{h_1}(y_1))))+\gamma_I(\nu_{h_1}(y_2))+\gamma_I(\tau(h_1, h_2))\\
&=& s(\gamma_H(h_1 \circ h_2)) + \lambda(h_1 \circ h_2)+\nu_{\gamma_H(h_1
\circ h_2)}(\sigma_{\gamma_H(h_2)}(\nu^{-1}_{\gamma_H(h_1)}(\gamma_I(y_1))))\\
& & + \; \nu_{\gamma_H(h_1)}(\gamma_I(y_2))+\gamma_I(\tau(h_1, h_2)).
\end{eqnarray*} 
By similar computations, on the other hand, we get
\begin{eqnarray*}
 \gamma(x_1) \circ \gamma(x_2) &=&  s(\gamma_H(h_1 \circ h_2))+\nu_{\gamma_H(h_1
\circ h_2)}(\sigma_{\gamma_H(h_2)}(\nu^{-1}_{\gamma_H(h_1)}(\gamma_I(y_1))))\\
&& + \; \nu_{\gamma_H(h_1 \circ h_2)}(\sigma_{\gamma_H(h_2)}(\nu^{-1}_{\gamma_H(h_1)}(\lambda(h_1))))
  +\nu_{\gamma_H(h_1)}(\gamma_I(y_2))\\
  && + \; \nu_{\gamma_H(h_1)}(\lambda(h_2))+\tau(\gamma_H(h_1), \gamma_H(h_2)).
\end{eqnarray*}
Preceding two equations give
$$\gamma_I(\tau(h_1, h_2))-\tau(\gamma_H(h_1), \gamma_H(h_2))=\nu_{\gamma_H(h_1
\circ h_2)}(\sigma_{\gamma_H(h_2)}(\nu^{-1}_{\gamma_H(h_1)}(\lambda(h_1))))-\lambda(h_1 \circ h_2)+\nu_{\gamma_H(h_1)}(\lambda(h_2)),$$
which, along with \eqref{modeqn2} proves  condition (ii), that is, $\gamma_I^*([(\beta, \tau)])= \gamma_H^*([(\beta, \tau)])$. 

Conversely, let $(\phi,\theta) \in \Autb(H) \times \Autb(I)$ satisfy conditions (i) and (ii). Condition (ii) guarantees the existence of a map  $\lambda : H \rightarrow I$ in $C_N^1$ such that   
$$\big(\beta^{(\phi, \Id)},\tau^{(\phi, \Id)}\big) - \big(\beta^{(\Id, \theta^{-1})},\tau^{(\Id, \theta^{-1})}\big) = \partial^1(\lambda).$$ 
Define the map $\gamma: E \rightarrow E$, for all $h \in H$ and $y \in I$,  by 
$$\gamma(s(h)+y) = s(\phi(h))+ \lambda(h) + \theta(y).$$
A routine check   then shows that $\gamma \in \Autb_I(E)$, $\gamma_H = \phi$ and $\gamma_I = \theta$. The proof is now complete. \hfill $\Box$

\end{proof}

We conclude with a reduction argument on extension and lifting problem for automorphisms of  skew  braces of nilpotent type, i.e., $(H, +)$ is nilpotent. We'll only deal with brace extensions from $\STExt_{(\nu, \mu, \sigma)}(H, I)$, with $H$ of finite order, for a given good triplet of actions $(\nu, \mu, \sigma)$ of $H$ on $I$.  Let us start with the following fixed extension of a finite left skew brace $H$ by a trivial brace $I$ lying in $\STExt_{(\nu, \mu, \sigma)}(H, I)$:
$$\mathcal{E} : 0 \to I \rightarrow E  \rightarrow  H \to 0.$$
 For this extension $\mathcal{E}$, by Theorem \ref{wells5 sb}, we get the exact sequence of groups
\begin{equation}\label{lift1}
0 \rightarrow \Autb^{H,I}(E) \rightarrow \Autb_I(E) \stackrel{\rho(\mathcal{E})}{\longrightarrow} \C_{(\nu, \mu, \sigma)} \stackrel{\omega(\mathcal{E})}{\longrightarrow} \RH^2_N(H,I),
\end{equation}
where $\omega(\mathcal{E})$ is a derivation.

Let $P_i$ denote a Sylow $p_i$-subgroup of $(H, \circ)$, where $p_i$ is a prime divisor of the order of $H$. Notice that $P_i$ is also  the  Sylow $p_i$-subgroup of $(H, +)$; hence, $H$ being nilpotent type,  $P_i$ becomes a left  ideal of $H$ (\cite[Lemma 4.10]{CSV19}). Let $R_i$ denote the pre-image of $P_i$ in $E$. Notice that $(\nu^i, \mu^i, \sigma^i)$ is a good triplet of actions of $P_i$ on $I$, where $\nu^i := \nu|_{P_i}$,  $\mu^i := \mu|_{P_i}$ and $\sigma^i := \sigma|_{P_i}$, and the extension 
$$\mathcal{E}_i : 0 \to I \to R_i \to P_i \to 0$$
lies in $\STExt_{(\nu^i, \mu^i, \sigma^i)}(H, I)$.  
Set 
$$\C_{(\nu^i, \mu^i, \sigma^i)} := \{ (\phi,  \theta) \in  \Autb(P_i) \times \Autb(I) \mid \nu^i_h=\theta^{-1}\nu^i_{\phi(h)}\theta, \mu^i_h=\theta^{-1}\mu^i_{\phi(h)}\theta \mbox{ and } \sigma^i_h = \theta^{-1}\sigma^i_{\phi(h)}\theta \}.$$
The extension $\mathcal{E}_i$ now gives  the exact sequence of groups
$$0 \rightarrow \Autb^{P_i,I}(R_i) \rightarrow \Autb_{I}(R_i) \stackrel{\rho(\mathcal{E}_i)}{\longrightarrow} \C_{(\nu^i, \mu^i, \sigma^i)} \stackrel{\omega(\mathcal{E}_i)}{\longrightarrow} \RH^2_N(P_i,I).$$

On the other hand, denote by $\Autb_{I, R_i}(E, I)$ the subgroup of $\Autb_I(H,I)$ consisting of all automorphism of $E$ normalising $I$ as well as $R_i$, and set $\C^i_{(\nu, \mu, \sigma)} := \{(\phi, \theta) \in \C_{(\nu, \mu, \sigma)} \mid \phi(P_i) = P_i\}$. Then for the extension
$$\mathcal{E} : 0 \to I \to E \to H \to 0,$$
 we get the following exact sequence of groups from \eqref{lift1}:
$$0 \rightarrow \Autb^{H,I}(E) \rightarrow \Autb_{I, R_i}(E) \stackrel{\rho(\mathcal{E})}{\longrightarrow} \C^i_{(\nu, \mu, \sigma)} \stackrel{\omega(\mathcal{E})}{\longrightarrow} \RH^2_N(H,I).$$
Let $\res^H_{P_i} : \RH^2_N(H,I) \to \RH^2_N(P_i,I)$ be the restriction homomorphisms as defined in Corollary \ref{cor-sec4}. Define $r^H_{P_i} : \C^i_{(\nu, \mu, \sigma)} \to \C_{(\nu^i, \mu^i, \sigma^i)}$ by 
$$r^H_{P_i} (\phi, \theta) = (\phi|_{P_i}, \theta).$$
Using the definition of $\omega(\mathcal{E})$, we now get the following commutative diagram:
\begin{equation}\label{lift2}
\begin{CD}
 \C^i_{(\nu, \mu, \sigma)}   @>{\omega(\mathcal{E})}>>  \RH^2_N(H, I)\\
  @VV{r^H_{P_i}}V  @VV{\res^H_{P_i}}V\\
 \C_{(\nu^i, \mu^i, \sigma^i)}  @>{\omega(\mathcal{E}_i)}>>  \RH^2_N(P_i, I).
\end{CD}
\end{equation}

Recall that $\Ho^2_N((H, \circ), I)$ denotes the second cohomology group of the group $(H, \circ)$ with coefficients in $I$, where the right action of $H$ on $I$ is through $\sigma$. Similarly $\Ho^2_N((P_i, \circ), I)$ denotes the second cohomology group of $(P_i, \circ)$ with coefficients in $I$, where the right action of $P_i$ on $I$ is through $\sigma^i = \sigma|_{P_i}$. By Proposition \ref{prop-sec4} there exist embeddings $\iota : \RH^2_N(H, I)  \to \Ho^2_N((H, \circ), I)$ and $\iota_i : \RH^2_N(P_i, I)  \to \Ho^2_N((P_i, \circ), I)$. We now get the following commutative diagram:
\begin{equation}\label{lift3}
\begin{CD}
\RH^2_N(H, I)  @>{\iota}>>  \Ho^2_N((H, \circ), I)\\
  @VV{\res^H_{P_i}}V  @VV{\res^H_{P_i}}V\\
\RH^2_N(P_i, I) @>{\iota_i}>>  \Ho^2_N((P_i, \circ), I).
\end{CD}
\end{equation}

Let $\pi(H) := \{p_1, \ldots, p_r\}$ be the set of all distinct prime divisors of the order of $H$. Please note the ad-hoc use of $\pi$. With this set-up, we finally have 
\begin{thm}
 Let $(\phi, \theta) \in \C_{(\nu, \mu, \sigma)}$ be such that $(\phi|_{P_i}, \theta) \in \C_{(\nu^i, \mu^i, \sigma^i)}$ is inducible for some Sylow $p_i$-subgroup of $H$ for each $p_i \in \pi(H)$. Then $(\phi, \theta)$ is inducible. 

Conversely, if $(\phi, \theta) \in \C_{(\nu, \mu, \sigma)}$ is inducible, then $(\phi|_{P_i}, \theta) \in \C_{(\nu^i, \mu^i, \sigma^i)}$ is inducible whenever $\phi(P_i) = P_i$.
\end{thm}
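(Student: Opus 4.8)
The plan is to exploit the Wells-type exact sequence \eqref{lift1} for $\mathcal{E}$ together with its analogs for the Sylow subextensions $\mathcal{E}_i$, and to glue the local inducibility data using the commutative diagrams \eqref{lift2} and \eqref{lift3}. By Theorem \ref{wells7} (in its split form, Theorem \ref{wells5}), a pair $(\phi, \theta) \in \C_{(\nu, \sigma)}$ is inducible if and only if $\omega(\mathcal{E})(\phi, \theta) = 0$ in $\RH^2_N(H, I)$; likewise $(\phi|_{P_i}, \theta)$ is inducible for $\mathcal{E}_i$ iff $\omega(\mathcal{E}_i)(\phi|_{P_i}, \theta) = 0$ in $\RH^2_N(P_i, I)$. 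So the theorem is equivalent to the assertion that $\omega(\mathcal{E})(\phi, \theta) = 0$ precisely when all the local obstructions vanish (with the appropriate quantifiers on $P_i$).

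First I would handle the converse direction, which is the easy one: if $(\phi, \theta)$ is inducible and $\phi(P_i) = P_i$, then $(\phi, \theta) \in \C^i_{(\nu,\sigma)}$, so by the commutativity of \eqref{lift2} we get
$$\omega(\mathcal{E}_i)\big(r^H_{P_i}(\phi,\theta)\big) = \res^H_{P_i}\big(\omega(\mathcal{E})(\phi,\theta)\big) = \res^H_{P_i}(0) = 0,$$
and $r^H_{P_i}(\phi, \theta) = (\phi|_{P_i}, \theta)$ is thus inducible for $\mathcal{E}_i$ by Theorem \ref{wells5} applied to $\mathcal{E}_i$. (One must also note that $\phi(P_i) = P_i$ together with $(\phi,\theta) \in \C_{(\nu,\sigma)}$ forces $(\phi|_{P_i},\theta) \in \C_{(\nu^i,\sigma^i)}$, which is immediate from the definitions since $\nu^i = \nu|_{P_i}$ and $\sigma^i = \sigma|_{P_i}$.)

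For the forward direction, suppose $(\phi|_{P_i}, \theta)$ is inducible for $\mathcal{E}_i$ for some choice of Sylow $p_i$-subgroup, for each prime $p_i \in \pi(H)$. The crucial point is that, after replacing $(\phi, \theta)$ by a conjugate, we may assume $\phi(P_i) = P_i$ for the chosen Sylow subgroups — indeed $\phi$ is a brace automorphism, hence a group automorphism of $(H, \circ)$, so $\phi(P_i)$ is again a Sylow $p_i$-subgroup and is $(H,\circ)$-conjugate to $P_i$; one absorbs this conjugation into $\phi$ (this does not change the class $\omega(\mathcal{E})(\phi,\theta)$ up to the $\Autb^{H,I}(E)$-ambiguity, which is harmless for the vanishing statement). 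Then $(\phi,\theta) \in \C^i_{(\nu,\sigma)}$ for each $i$, and inducibility of $(\phi|_{P_i},\theta)$ gives $\omega(\mathcal{E}_i)(\phi|_{P_i},\theta) = 0$, whence by \eqref{lift2} we obtain $\res^H_{P_i}\big(\omega(\mathcal{E})(\phi,\theta)\big) = 0$ for every $i$. Now pass to group cohomology via the embedding $\iota$ of Proposition \ref{prop-sec4}: by \eqref{lift3}, $\res^H_{P_i}\big(\iota(\omega(\mathcal{E})(\phi,\theta))\big) = \iota_i\big(\res^H_{P_i}(\omega(\mathcal{E})(\phi,\theta))\big) = 0$ in $\Ho^2_N((P_i,\circ), I)$. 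But the classical fact from group cohomology is that the product of restriction maps $\Ho^2((H,\circ), I) \to \prod_i \Ho^2((P_i,\circ), I)$ over a full set of Sylow subgroups is injective (each restriction is injective on the $p_i$-primary component, since corestriction-restriction is multiplication by the index, coprime to $p_i$). Hence $\iota(\omega(\mathcal{E})(\phi,\theta)) = 0$, and since $\iota$ is injective, $\omega(\mathcal{E})(\phi,\theta) = 0$, so $(\phi,\theta)$ is inducible by Theorem \ref{wells7}.

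The main obstacle I anticipate is the conjugation step in the forward direction: one must be careful that modifying $\phi$ by an inner automorphism of $(H,\circ)$ to arrange $\phi(P_i) = P_i$ keeps the pair inside $\C_{(\nu,\sigma)}$ and does not disturb the local inducibility hypotheses — in particular, inner automorphisms of $(H,\circ)$ need not be brace automorphisms in general, so one has to check that conjugation by an element of $H$ (or rather by a suitable lift) genuinely lands back in $\Autb(H)$ in this setting, or else argue prime-by-prime without simultaneously normalizing all $P_i$. A clean way around this is to do the argument one prime at a time: fix $p_i$, let $\phi(P_i) = P_i'$, choose the Sylow subgroup for which inducibility is assumed to be $P_i'$ itself (the hypothesis allows \emph{some} choice), and then $(\phi, \theta) \in \C^i_{(\nu,\sigma)}$ directly with respect to $P_i'$; this sidesteps conjugation entirely. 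The remaining ingredient — injectivity of the Sylow restriction map in degree-$2$ group cohomology — is standard and can be cited.
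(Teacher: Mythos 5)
Your proof is correct and follows essentially the same route as the paper: reduce inducibility to the vanishing of $\omega(\mathcal{E})(\phi,\theta)$ via the Wells sequence, transport the obstruction through the commutative diagrams \eqref{lift2} and \eqref{lift3} into group cohomology, kill it by the corestriction--restriction (Sylow detection) argument, and read off the converse from \eqref{lift2}. The conjugation issue you flag is actually vacuous in this setting, since $P_i$ coincides with the unique Sylow $p_i$-subgroup of the abelian group $(H,+)$ and is therefore characteristic, so $\phi(P_i)=P_i$ holds automatically.
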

\begin{proof}
Let $(\phi, \theta) \in \C_{(\nu, \mu,  \sigma)}$ be as in the statement and $\omega(\mathcal{E})(\phi, \theta) = [f]$. Then by the hypothesis and the commutativity of  diagram \eqref{lift2}, it follows that $\res^H_{P_i}([f])$ is the identity element of $\RH^2_N(P_i, I)$. Further, by the commutativity of diagram \eqref{lift3}, we have $\res^H_{P_i}([f'])$ is the identity  element of $\Ho^2_N((P_i, \circ), I)$. Being the cohomology of groups, we can now use corestriction-restriction homomorphism result \cite[(9.5) Proposition. (ii)]{KSB} to deduce that $k[f']$ is the zero cohomology class of $\Ho^2_N((H, \circ), I)$, where $k$ is the index of $P_i$ in $H$. Since this is true for at least one Sylow $p_i$-subgroup for each $p_i \in \pi(H)$, it follows that $[f']$ is the identity element of $\Ho^2_N((H, \circ), I)$, which, $\iota$ being an embedding, implies that $[f]$ is  the zero cohomology class of $\Ho^2_N(H, I)$. Hence $(\phi, \theta)$ is inducible.

The converse part is left as an exercise for the reader. \hfill $\Box$ 

\end{proof}

\end{document}